\newtheorem{theorem}{Theorem}
\newtheorem{proposition}{Proposition}
\newtheorem{definition}{Definition}
\newtheorem{lemma}{Lemma}
\newtheorem{example}{Example}
\newtheorem{alg}{Algorithm}
\newtheorem{cor}{Corollary}
\newtheorem{proper}{Properties}
\newcommand{\Keywords}[1]{\par\noindent
{\small{ \textbf{Key words}\/}: #1}}
\begin{document}
\title{A Nash--H\"{o}rmander iteration and boundary elements for the Molodensky problem}
\author{Adrian Costea\thanks{Institute of Applied Mathematics, Leibniz University Hannover, Welfengarten 1, 30167 Hannover, Germany, email: \{costea, stephan\}@ifam.uni-hannover.de } \and Heiko Gimperlein\thanks{Department of Mathematical Sciences, University of Copenhagen, Universitetsparken 5, 2100 Copenhagen \O , Denmark, email: gimperlein@math.ku.dk} \and Ernst P.~Stephan$^\ast$}
\maketitle
\begin{abstract}
We investigate the numerical approximation of the nonlinear Molodensky problem, which reconstructs the surface of the earth from the gravitational potential and the gravity vector. The method, based
on a smoothed Nash--H\"{o}rmander iteration, solves a sequence of exterior oblique Robin problems and uses a regularization based on a higher-order heat equation to overcome the loss of derivatives in the surface update.
In particular, we obtain a quantitative a priori estimate for the error after $m$ steps,
justify the use of smoothing operators based on the heat equation, and comment on the accurate evaluation of the Hessian of the gravitational potential on the surface, using a representation in terms of a hypersingular integral. A boundary element method is used to solve the exterior problem.
Numerical results compare the error between the approximation and the exact solution in a model problem.
\Keywords{Molodensky problem, single layer potential, second--order derivatives, heat--kernel smoothing,}
\end{abstract}
\section{Introduction}

The determination of the shape of the earth and its exterior gravitational field from terrestrial measurements is a basic problem in physical geodesy \cite{c3, Heiskanen1967}. It is usually formulated as an exterior free boundary problem for the Laplace equation in $\mathbb{R}^3$ with boundary conditions corresponding to the type of observation. In the formulation introduced by Molodensky \cite{Molodensky1, Molodensky2}, the gravitational potential $W$ and field $G$ are prescribed on an unknown boundary diffeomorphic to the $2$--sphere by a map $\varphi : S^2 \rightarrow \mathbb{R}^3$. With the advent of satellite technologies to determine the earth's surface, high-precision studies combine satellite data with local gravity measurements. \\

The mathematical analysis of Molodensky's problem was initiated by H\"{o}rmander \cite{Hoermander}, who investigated the local existence and uniqueness of solutions based on the implicit function theorem for $C^\infty$--functions on the boundary. In this article, we consider the mathematically justified numerical analysis of the Molodensky problem, using a feasible reformulation of the constructive proof of existence.\\

To solve the free boundary problem, we iteratively construct a sequence $(\varphi_m)_{m \in \mathbb{N}_{0}}$ of approximate solutions, where $\varphi_m$ is obtained from the solution to the problem linearized around $\varphi_{m-1}$. As the main difficulty, the solution operator to the linearization is of order $2$. It is unbounded in the natural Banach spaces of functions, and approximations constructed from Banach fixed--point iterations will eventually lose regularity. Based on the insights of Nash \cite{Nash} and Moser \cite{Moser}, H\"{o}rmander defines a smoothened, convergent sequence of approximate solutions for data, which are close in a H\"{o}lder norm to those of a given solution.\\

Note that the increments
$\varphi_{m+1}-\varphi_m$ differ from certain increments considered in the geodesic
literature. In our case, the linearized problem involves the solution of
an exterior boundary problem for the \emph{homogeneous} Laplace equation, and no ``topographic-isostatic'' correction is necessary to correct for unknown masses between $\varphi_m(S^2)$ and $\varphi_0(S^2)$.\\

In this work, we show that a variant of
this construction is numerically feasible and obtain an a priori estimate
for the error in the $m$--th step. We test it on a simple example, in which
the algorithm is started with the canonical embedding of the unit sphere
in $\mathbb{R}^3$ and recovers the sphere of radius $1.1$. The reader will find a precise exposition of our main results below. In particular, we legitimize the use of smoothing operators based on the solution of higher--order heat equations in this context. We also numerically explore the accurate evaluation of second--order derivatives on the boundary.\\

This article shows that the rigorous numerical solution of the nonlinear Molodensky problem is computationally feasible. To improve the stability and numerical accuracy of the method proposed in this paper, further development needs to focus on the domain discretization error, comparing higher--order approximations of the surface with meshless methods for realistic data. In particular, to be relevant for the geodesic community, one might model the exact surface
by the ETOPO1 model of the earth and compute the gravity vector from the
EGM2008 model. A more realistic model problem could then try to recover
$\varphi$ starting from the GRS80 ellipsoid as surface $\varphi_0$ and the
corresponding Somigliani-Pizetti field as $(W_0, G_0)$ \cite{Graf}.\\

For simplicity, most of this article considers the case of a nonrotating earth. The analysis, however, readily extends to the general case.

\subsection{Related work}

Local existence for the Molodensky problem with data in a neighborhood of a given solution consisting of a gravitational potential $W_0$, gravitational field $G_0$ and surface $\varphi_0$, regularity and uniqueness were first established by H\"{o}rmander.
\begin{theorem}[Theorem 3.4.1, \cite{Hoermander}] \label{nonlinmoloexunq}
Assume that $(W_0, G_0, \varphi_0)$ satisfies the assumptions in Section \ref{ch:algo}, and let $\epsilon >0$.\\
a) For all $W,G$ in an  $\mathscr {H}^{2+\epsilon}$ neighborhood of $W_0,G_0$, the Molodensky problem admits a solution $\varphi$ close to $\varphi_0$ in $\mathscr {H}^{2+\epsilon}$. \\
b) If  $W,G$ are in $\mathscr {H}^a$ for some non-integer $a>2+\epsilon$, then $\varphi \in \mathscr {H}^a$.\\
c) Any small $\mathscr {H}^{3+\epsilon}$ neighborhood of $\varphi_0$ contains at most one solution of the problem.
\end{theorem}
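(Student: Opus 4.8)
The natural plan is to read Theorem~\ref{nonlinmoloexunq} as an application of the Nash--Moser implicit function theorem --- in the quantitative form due to H\"ormander, on which the iteration of this paper is modeled --- to the nonlinear map that assigns to a candidate surface the Cauchy data it generates for the exterior potential.

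\emph{Step 1: a tame zero-finding formulation.} I would first write the Molodensky problem as $\mathscr{F}(\varphi;W,G)=0$ in the scale of H\"older spaces $\mathscr{H}^s=C^s(S^2)$. Given a surface $\varphi$ and a potential $W$, solve the exterior Dirichlet problem for the Laplacian on the unbounded component of $\mathbb{R}^3\setminus\varphi(S^2)$ with boundary datum $W\circ\varphi^{-1}$ and $u\to 0$ at infinity, obtaining a harmonic function $u_{\varphi,W}$, and set $\mathscr{F}(\varphi;W,G):=(\nabla u_{\varphi,W})\circ\varphi-G$. Since the tangential part of $\nabla u_{\varphi,W}|_{\varphi(S^2)}$ is automatically the surface gradient of $W$, the effective content of $\mathscr{F}=0$ is a single scalar equation for the surface. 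I would then check that $\mathscr{F}$ is a smooth tame map between the H\"older scales: the exterior Dirichlet solution operator is tame by Schauder theory on $C^{k,\alpha}$ domains, boundary traces and the gradient cost a fixed number of derivatives, and pullback and composition with $\varphi$ are tame; all constants are uniform for $\varphi$ in a small $\mathscr{H}^{2+\epsilon}$-ball about $\varphi_0$, and $\mathscr{F}(\varphi_0;W_0,G_0)=0$ by hypothesis.

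\emph{Step 2: the linearization is a non-degenerate oblique-derivative problem with a tame inverse.} Differentiating $\mathscr{F}$ in $\varphi$ at $(\varphi_0;W_0,G_0)$ and using Hadamard shape-derivative identities, the equation $L_0\,\delta\varphi=g$ unwinds into an exterior boundary value problem for the Laplace equation in which the potential correction $\delta u$ satisfies an \emph{oblique derivative (Robin) condition} on $\varphi_0(S^2)$ --- the oblique direction being that of $\nabla W_0$, with $\delta\varphi$ entering through the Hessian $\nabla^2 W_0$ --- and $\delta\varphi$ is then recovered from $\delta u$. By the assumptions of Section~\ref{ch:algo} (transversality of $\nabla W_0$ to $\varphi_0(S^2)$, absence of critical points of $W_0$ near the surface) this oblique problem is non-degenerate, and I would establish Schauder-type a priori estimates and unique solvability for it throughout the scale. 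Because the surface increment is read off by differentiating the potential, the solution operator is of order two in the sense of the introduction: it obeys a tame estimate with a \emph{fixed loss of derivatives}, $\|L_\varphi^{-1}g\|_s\le C\bigl(\|g\|_{s+\ell}+\|\varphi\|_{s+\ell}\,\|g\|_{s_0+\ell}\bigr)$, and one needs this uniformly for $\varphi$ near $\varphi_0$, not merely at $\varphi_0$. Proving these a priori estimates for the exterior oblique-derivative problem in the full H\"older scale, uniformly under perturbation of the surface, is the main obstacle, and it is exactly this loss of derivatives that rules out a Banach fixed point and forces the smoothed iteration.

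\emph{Step 3: Nash--Moser and the three conclusions.} With $\mathscr{F}$ smooth and tame, $\mathscr{F}(\varphi_0;W_0,G_0)=0$, $L_\varphi$ invertible with a fixed derivative loss uniformly near $\varphi_0$, and smoothing operators $S_\theta$ on $\{\mathscr{H}^s\}$ available --- realized, as the later sections make precise, through the semigroup of a higher-order heat equation on $S^2$ --- H\"ormander's theorem runs the iteration $\varphi_{m+1}=\varphi_m-S_{\theta_m}L_{\varphi_m}^{-1}\mathscr{F}(\varphi_m;W,G)$ and produces, for every $(W,G)$ in a sufficiently small $\mathscr{H}^{2+\epsilon}$-neighborhood of $(W_0,G_0)$, a zero $\varphi\in\mathscr{H}^{2+\epsilon}$ of $\mathscr{F}(\,\cdot\,;W,G)$ close to $\varphi_0$; this is part a). For part b), once $\varphi\in C^{2+\epsilon}$ the domain is regular enough that a tangential-derivative bootstrap of $\mathscr{F}=0$, using the non-degeneracy of $L_\varphi$, upgrades the solution to $\mathscr{H}^a$ whenever $W,G\in\mathscr{H}^a$; the index $a$ must be non-integer because the Schauder estimates, and the interpolation inequalities underlying the tame estimates, degenerate at integer H\"older exponents. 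For part c), if $\varphi_1,\varphi_2\in\mathscr{H}^{3+\epsilon}$ are two solutions with the same data, then $0=\mathscr{F}(\varphi_1)-\mathscr{F}(\varphi_2)=\bigl(\int_0^1 L_{\varphi_t}\,dt\bigr)(\varphi_1-\varphi_2)$ with $\varphi_t=(1-t)\varphi_1+t\varphi_2$; the extra regularity demanded of the two solutions is precisely what absorbs the derivative loss in this difference estimate and keeps the averaged linearization injective, forcing $\varphi_1=\varphi_2$.
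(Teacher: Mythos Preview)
The paper does not give its own proof of this theorem; it is quoted from H\"ormander as background. The constructive ingredients of H\"ormander's argument are, however, laid out in Section~\ref{ch:algo} and the Appendix: the linearized oblique Robin problem (\ref{stronglin}), the surface update (\ref{corrphi}), the smoothing, and the tame estimates (\ref{Phi215})--(\ref{psi216}) that drive convergence.

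Your outline is in the right spirit --- a Nash--H\"ormander scheme for the map from surface to Cauchy data, with the linearization recognized as an oblique-derivative problem with a fixed loss of derivatives --- and parts a), b), c) are argued along the correct lines. Two specific points need repair.

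First, in H\"ormander's formulation the oblique direction is not $\nabla W_0$ but $h_0 = -(\nabla g_0)^{-1} g_0$, the inverse Hessian of the potential applied to the gradient; see (\ref{intromolo:h}). This is why the Marussi condition (\ref{Marussi}) --- invertibility of $\nabla g$ --- is among the hypotheses, and it plays no visible role in your sketch.

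Second, and more substantively, you assert ``unique solvability'' of the linearized oblique problem from transversality alone. As the paper explains in the introduction, the exterior oblique Robin problem is Fredholm of index $0$, but its homogeneous version has three linearly independent solutions, so it is solvable only for data in a codimension-$3$ subspace. What makes a right inverse $\Psi$ exist is the radiation condition $u(x) = c/|x| + O(|x|^{-3})$, which fixes the center of mass, together with the augmented formulation (\ref{intro:linpro}), (\ref{stronglin}) in which three auxiliary constants $a_j$ absorb the incompatible part of the data. Your Step~2 omits this, and without it the tame right inverse required in Step~3 is not available. This is the one genuine gap.
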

See the appendix for our definition of the space $\mathscr {H}^a$ of H\"{o}lder--continuous functions. Based on ideas of Nash and Moser, H\"{o}rmander found an iterative method of solution, which overcomes the loss of derivatives in each iteration step with an abstract smoothing operator. We consider the numerical aspects of his approach and discuss an implementation using the boundary element method (BEM).\\
The numerical solution of the linearized Molodensky problem using linear boundary elements has first been analyzed by Klees et al.~\cite{Schwabfast}, see also work by Holota \cite{Holota} or Freeden and coauthors \cite{FreedenMayer06}. Their general convergence analysis implies the convergence and stability of our solution, and we refer to their paper for a discussion of the linearized problem. \\
To overcome a loss of derivates in simple ordinary differential equations, smoothed iterative solvers have been investigated by Jerome \cite{Jerome} and Jerome and Fasshauer \cite{Jeromeheatsm}. The smoothing step would usually involve the solution of a heat equation, which does not satisfy the non-saturation property made in previous theoretical analyses (Property \ref{propersmoothlikehoer} (iii) in Section \ref{ch:smoothing}). However, smoothing with the help of the heat equation is used in a variety of contexts
such as high-dimensional statistics or image processing, and numerous optimized implementations are readily available. Our analysis rigorously legitimizes their application. We are not aware of previous numerical investigations of smoothed fixed--point iterations for $2$-- or higher--dimensional problems.\\
There has been recent interest in the numerical analysis of semilinear elliptic and parabolic equations on the sphere (see e.g.~\cite{Wendland}). Our article focuses mostly on the specific difficulties of the Molodensky problem.\\

\subsection{Formulation of the problem}

In the following, the earth is assumed to be a rigid body of mass $M$, which rotates with a constant and known angular velocity $\omega$ around the $x_3$ axis through the center of mass $0$.
The surface is diffeomorphic to the sphere $S^2=\{x \in \mathbb R^3; x_1^2+x_2^2+x_3^2=1\}$ under a map $\varphi : S^2 \to \mathbb R^3$.
The measured data $W$ and $G$ may then be considered as functions on $S^2$
\begin{equation*}
W:S^2 \rightarrow \mathbb R,\quad G:S^2 \rightarrow \mathbb R^3.
\end{equation*}
The Molodensky problem is to find a sufficiently smooth embedding $\varphi : S^2 \rightarrow \mathbb R^3$ such that
\begin{equation}
 W = w \circ \varphi\ , \quad G = \nabla w\circ \varphi= g \circ \varphi \quad \mbox{on} \quad S^2, \label{nonlinmol}
\end{equation}
where  $w : \varphi( S^2) \to \mathbb R$ denotes the gravity potential, $g = \nabla w : \varphi(S^2) \to \mathbb R^3$ the gravity vector.
The static gravitational potential $v$ is harmonic in the exterior of the earth with boundary values
\begin{equation}
w(x) = v(x) + \frac{\omega^2}{2}(x_1^2 + x_2^2)\quad \text{ on }\quad \varphi(S^2) \ . \label{potential}
\end{equation}
The center of mass is fixed to $0$ by the radiation condition
\begin{equation}
v(x) = \frac{M}{|x|} + \mathcal{O}(|x|^{-3})\quad \mbox{for} \quad x \rightarrow \infty \ . \label{radiation}
\end{equation}
We also require that the Marussi condition \cite{Krarup},
\begin{equation}\label{Marussi}
\det g'(x) \neq 0, \quad x\in \varphi(S^2)\ ,
\end{equation}
is fulfilled and set
\begin{equation}\label{intromolo:h}
h=-{g'}^{-1}g \ .
\end{equation}
To determine $\varphi$,  we start with a $C^\infty$ solution of the Molodensky problem $(\varphi_0, W_0, G_0)$  such that (\ref{Marussi}) is satisfied and  the corresponding $h_0$ is never tangential to $\varphi_0(S^2)$.
Linearization around $(\varphi_0, W_0, G_0)$ results in the oblique Robin problem
\begin{equation*} \label{linearizedproblem}
\Delta u = 0 \quad \mbox{ outside } \varphi_0(S^2)\ , \ (u+\langle \mbox{grad}\,u,h_0\rangle) \circ\varphi=f \quad \mbox{on} \quad S^2\ , \ u(x) = \frac{M}{|x|} + \mathcal{O}(|x|^{-3})\ .
\end{equation*}

Under certain conditions on $\varphi$ and $h$, which we assume to hold in what follows, the oblique Robin problem is a regular elliptic boundary value problem, and Fredholm's alternative holds. The oblique Robin problem can be shown to have Fredholm index $0$, and the homogeneous problem admits three linearly independent eigensolutions. Existence and uniqueness are therefore assured provided that $f$ belongs to a subspace of $C^\infty({\partial \Omega})$ of codimension $3$.\\

For example, the linearized problem is well--posed when $\varphi_0(S^2)$ is the unit sphere and $W_0, G_0$ are the Newton potential and its gradient. The three--dimensional subspace is then spanned by the spherical harmonics $\{Y_{1,-1}, Y_{1,0}, Y_{1,1}\}$ of degree $1$. A careful analysis in \cite{Hoermander}, Theorem 1.5.1 and subsequent examples, shows that the problem remains well--posed as long as $\varphi$ is close to the identity and $h$ makes a small angle with both the exterior normal vector field and the radial vector field $\frac{x}{|x|}$. Numerically, the conditions are satisfies e.g.~for any topography with slopes $< 41^\circ$, provided slopes $>10^\circ$ are sufficiently rare.
In general, we may then use the restrictions $\{A_j \}_{j=1}^3$ to $\varphi_0(S^2)$ of harmonic functions $u_j^{\varphi_0}$ with $u_j^{\varphi_0}(x) \to 0$  for $|x|\to \infty$,
such that the first degree harmonics in the multipole expansion at infinity are linearly independent.\\

The augmented formulation of the linearized problem now reads as follows:
Find $u$ and constants $a_{j} \in \mathbb R$ such that
\begin{align}
&\Delta u=0 \quad \mbox{outside} \quad \varphi(S^2),  \nonumber \\
& u + \nabla u \cdot h = f -\sum_{j=1 }^3 a_{j} A_j  \quad \mbox{on} \quad \varphi(S^2)  \label{intro:linpro}  \\
& u(x)=\frac{c}{|x|} + O(|x|^{-3}) \quad \mbox{when} \quad |x| \rightarrow \infty,\quad c \in \mathbb R, \nonumber
\end{align}
for suitable $f \in C^{\infty}(\varphi(S^2))$.\\

The iterative solution of the Molodensky problem solves a sequence of linearized problems with
\begin{equation} \label{intro4}
f_m =  \dot W_m \circ \varphi_m^{-1}+(\dot G_m \circ \varphi_m^{-1}) \cdot h_m
\end{equation}
as specified in Section \ref{ch:algo} and updates $\varphi$ by an increment proportional to $\dot{\varphi} = (\nabla g \circ \varphi)^{-1} (\dot{G} - \nabla u \circ \varphi)$. As the main difficulty of the Molodensky problem, the update $\dot{\varphi}$ is in general less regular than ${\varphi}$.

\subsection{Error estimates}

We complement H\"{o}rmander's existence result by giving an a priori estimate for the error between the solution and the $m$-th iterative approximation. Here and in the remainder of the article, $\epsilon>0$ will be such that the relevant H\"{o}lder exponent is not an integer.\\

A sequence of approximate solutions $(W_m, \varphi_m)$ will be defined in Section \ref{ch:algo}, depending on a given solution $(W_0, G_0, \varphi_0)$ and two parameters $\theta_0$ and $\kappa$. Under the assumptions of this section we obtain the following a priori estimate in H\"{o}lder norms for data $(W,G)$ in an $\mathcal{H}^{2+\epsilon}$ neighborhood of $(W_0, G_0)$:

\begin{theorem}\label{apriorithm}
Let $\alpha>2+2\epsilon$, $0<a <\alpha$, $E=a- \alpha -1$ and $\tau>0$ sufficiently small. Then there exist constants $\theta_0, \kappa_0>0$ and $C_\tau>0$ such that the approximate solutions $(W_m, \varphi_m)$ satisfy for all $m \geq 0$ and $\kappa \geq \kappa_0$
\begin{equation}
\|W-W_m\|_{a+\epsilon} +\|\varphi-\varphi_m\|_{a+\epsilon}\leq C_\tau \left(\|W-W_0\|_{\alpha+\epsilon}+\|G-G_0\|_{\alpha+\epsilon}\right)(\theta_0^\kappa+m)^{\frac{E+1+\tau}{\kappa}}
\end{equation}
The constants depend only on the data.
\end{theorem}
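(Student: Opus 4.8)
The plan is to run a smoothed Nash--Moser--H\"ormander iteration and to control it by a quantitative induction on $m$. Recall from Section~\ref{ch:algo} that $(W_m,\varphi_m)$ is built step by step: at step $m$ one feeds \emph{smoothed} data $S_{\theta_m}(W-W_0,G-G_0)$, at scale $\theta_m=(\theta_0^\kappa+m)^{1/\kappa}$, into a right-hand side $f_m$ of the form~\eqref{intro4}, solves the augmented oblique Robin problem~\eqref{intro:linpro} for the harmonic function $u_m$ and the three constants $a_{j,m}$, and updates $\varphi_{m+1}=\varphi_m+\dot\varphi_m$ with $\dot\varphi_m=(\nabla g_m\circ\varphi_m)^{-1}(\dot G_m-\nabla u_m\circ\varphi_m)$, where $g_m=\nabla w_m$; the loss of two derivatives enters because the Hessian $g_m'=\nabla^2 w_m$ appears in~\eqref{intromolo:h}. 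Write $\delta:=\|W-W_0\|_{\alpha+\epsilon}+\|G-G_0\|_{\alpha+\epsilon}$; by hypothesis both $\delta$ and $\|W-W_0\|_{2+\epsilon}+\|G-G_0\|_{2+\epsilon}$ are small. The scheme is arranged so that $\varphi_m\to\varphi$ and $W_m\to W$, and it suffices to propagate, for a fixed finite window of H\"older indices $b$ around $a+\epsilon$ and $\alpha+\epsilon$, a family of a priori bounds of the (schematic) shape
\begin{equation*}
\|\dot W_m\|_b+\|\dot\varphi_m\|_b\;\lesssim\;\delta\,\theta_m^{\,b-\alpha-1}\,(\theta_{m+1}-\theta_m),\qquad \|(I-S_{\theta_m})(W-W_0,G-G_0)\|_b\;\lesssim\;\delta\,\theta_m^{\,b-\alpha-\epsilon}.
\end{equation*}
From the first bound, $\sum_{k\ge m}\dot\varphi_k$ controls $\|\varphi-\varphi_m\|_b$ with decay $\theta_m^{\,b-\alpha}$ for $b<\alpha$, while $\|\varphi-\varphi_m\|_{\alpha+\epsilon}\lesssim\delta$ merely stays bounded; interpolating (convexity of $\mathscr H^s$) between these two facts yields $\|\varphi-\varphi_m\|_{a+\epsilon}\lesssim\delta\,\theta_m^{\,a-\alpha+\tau}=\delta\,(\theta_0^\kappa+m)^{(E+1+\tau)/\kappa}$ for every small $\tau<\min(\epsilon,\alpha-a)$, and likewise for $W$; this is the asserted estimate.

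The induction rests on three \emph{tame} ingredients, fed into the recursion for the residual produced at step $m+1$. First, the a priori estimate for~\eqref{intro:linpro}: it loses two derivatives but is otherwise well posed, $\|u_m\|_{b+2}+\sum_j|a_{j,m}|\lesssim\|f_m\|_b+(\text{lower-order terms in }\varphi_m)$, uniformly as long as $\varphi_m$ stays in a small $\mathscr H^{3+\epsilon}$-ball around $\varphi_0$ and $h_m$ stays uniformly non-tangential to $\varphi_m(S^2)$ --- conditions that are part of the induction and are exactly those under which the augmented linearised problem is regular elliptic of index zero, in accordance with Theorem~\ref{nonlinmoloexunq} and the stability analysis of~\cite{Schwabfast}. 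Secondly, tame estimates for composition on $S^2$ and for the nonlinear Molodensky operator $\Phi$ (which maps a surface, together with the harmonic continuation of the potential, to the boundary data $(W,G)$, and whose derivative is the oblique Robin operator of~\eqref{intro:linpro}), yielding a quadratic bound for the Taylor remainder $\|\Phi(\varphi_m+\dot\varphi_m)-\Phi(\varphi_m)-\Phi'(\varphi_m)\dot\varphi_m\|_b\lesssim\|\dot\varphi_m\|_{b+2}\|\dot\varphi_m\|_{2+\epsilon}+\|\varphi_m-\varphi_0\|_{b+2}\|\dot\varphi_m\|_{2+\epsilon}^2$, together with a companion bound for $\Phi'(\varphi_m)-\Phi'(\varphi)$. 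Thirdly, the smoothing inequalities of Property~\ref{propersmoothlikehoer}, $\|S_\theta v\|_b\lesssim\theta^{(b-c)_+}\|v\|_c$ and $\|(I-S_\theta)v\|_b\lesssim\theta^{-(c-b)}\|v\|_c$ (the latter only in the admissible range, see below), together with the $\mathscr H^s$-convexity inequality. Combining these, the residual at step $m+1$ splits into a part quadratic in $\dot\varphi_m$, the unresolved data $(I-S_{\theta_m})(W-W_0,G-G_0)$ which carries the gain $\theta_m^{-\epsilon}$, and the corrections due to the three constants $a_{j,m}$; since $\theta_m=(\theta_0^\kappa+m)^{1/\kappa}$ grows slowly, $\theta_{m+1}/\theta_m=1+O(1/m)$, the discrete sums behave like the corresponding integrals, and each contribution is absorbed into $\delta\,\theta_{m+1}^{\,b-\alpha-1}(\theta_{m+2}-\theta_{m+1})$ once $\theta_0$ and $\kappa$ are chosen large depending only on the data. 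This closes the induction, and with it the bootstrap that $\varphi_m$ never leaves the $\mathscr H^{3+\epsilon}$-neighbourhood on which the first ingredient holds; tracking the constants through the argument fixes $C_\tau$.

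The step I expect to be the main obstacle --- and where the argument genuinely departs from~\cite{Hoermander} --- is that the heat-kernel smoothing used here does \emph{not} satisfy the non-saturation property (Property~\ref{propersmoothlikehoer}(iii)): for a regularising heat semigroup of order $2p$ one has $\|(I-S_\theta)v\|_b\lesssim\theta^{-(c-b)}\|v\|_c$ only when $c-b\le 2p$, whereas H\"ormander's abstract smoothing allows $c-b$ arbitrarily large. One therefore has to verify that every index gap $c-b$ actually arising above is bounded by a fixed number determined by $\alpha$, $a$, $\epsilon$ and the two-derivative loss --- which it is, since only finitely many H\"older indices in such a window are ever used --- and then choose the order $2p$ of the regularising heat equation strictly larger than that bound; all constants, $p$ included, then depend only on the data. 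The remaining work --- tame composition estimates on the moving surfaces $\varphi_m(S^2)$, the precise handling of the constants $a_{j,m}$, and checking that the chosen $\theta_m$-sequence makes the bootstrap self-consistent --- is routine Nash--Moser bookkeeping patterned on~\cite{Hoermander}, carried out in Section~\ref{ch:algo}.
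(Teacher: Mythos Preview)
Your proposal is correct and matches the paper's approach: both run the Nash--H\"ormander induction on the increment bound $\|\dot u_j\|_{a+\epsilon}\lesssim\delta\,\theta_j^{a-\alpha-1}$, feed in the tame estimates~\eqref{Phi215}--\eqref{psi216} for $\Phi''$ and the linearised inverse $\Psi$ together with the smoothing properties, and correctly isolate the only genuine departure from~\cite{Hoermander}, namely that the heat semigroup satisfies the saturation bound~(iii') only on a finite index window $0\le a-b<2k$, so that $k$ must be fixed once according to the finitely many H\"older gaps that arise. The paper organises the bookkeeping via H\"ormander's abstract error decomposition $e_m=e_m'+e_m''$ (smoothing plus linearisation error, with $g_m$ given by the telescoping formula~\eqref{gdefinition}) and obtains the final estimate by direct tail summation $\sum_{j>m}\Delta_j\theta_j^{E}\le C_\tau\theta_m^{E+1+\tau}$ rather than your interpolation step, but these are cosmetic differences.
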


Below, we note that the estimate on $W-W_m$ and $\varphi-\varphi_m$ implies a corresponding estimate on $G-G_m$.

The explicit dependence on the parameters $\theta_0$ and $\kappa$ obtained in Theorem \ref{apriorithm} allows us to investigate also a restarted algorithm, which is less susceptible to the spreading of numerical errors (see Algorithm \ref{alg:nashhormsmrest}):
\begin{itemize}
 \item[(0)] Choose an approximate solution $(W_0, G_0,\varphi_0)$ and $\theta_0$.
 \item[(1)] Using $(W_0, G_0,\phi_0)$ do $k$ steps of H\"ormander's method leading to $(W_k, G_k,\varphi_k)$.
 \item[(2)] Set $(W_0, G_0,\phi_0) = (W_k, G_k,\varphi_k)$, a corresponding new $\theta_0$ and go to (1).
\end{itemize}
We denote the approximate solution after $l$ iterations of (1) by $(W^{(l)}, G^{(l)},\varphi^{(l)})$ and the corresponding $\theta_0$ by $\theta_{0,l}$. From the Lipschitz--continuity of the map $\Gamma: (W,\varphi) \mapsto G$,
\begin{equation*}
\|G-G^{(l-1)}\|_{\alpha +\epsilon} \leq C_K \left(\|W-W^{(l-1)}\|_{\alpha+2+\epsilon}+\|\varphi-\varphi^{(l-1)}\|_{\alpha+2+\epsilon}\right)
\end{equation*}
on bounded subsets $\|\varphi-\varphi_0\|_{\alpha+2+\epsilon} \leq K$, we obtain
\begin{align*}
\|W-W^{(l)}\|_{a+\epsilon} +\|\varphi-\varphi^{(l)}\|_{a+\epsilon}&\leq C_\tau \left(\|W-W^{(l-1)}\|_{\alpha+\epsilon}+\|G-G^{(l-1)}\|_{\alpha+\epsilon}\right)(\theta_{0,l-1}^\kappa+k)^{\frac{E+1+\tau}{\kappa}}\\
&\leq C_\tau \left(\|W-W^{(l-1)}\|_{\alpha+\epsilon}+\|G-G^{(l-1)}\|_{\alpha+\epsilon}\right)(\theta_{0,l-1}^\kappa+k)^{\frac{E+1+\tau}{\kappa}}\\
&\lesssim \left(\|W-W^{(l-1)}\|_{\alpha+2+\epsilon} +\|\varphi-\varphi^{(l-1)}\|_{\alpha+2+\epsilon}\right) (\theta_{0,l-1}^\kappa+k)^{\frac{E+1+\tau}{\kappa}}
\end{align*}
for any $\tau>0$.\\

Iterating this estimate yields that the sequence of iterates of the restarted algorithm converges, if we choose a sufficiently rapidly increasing sequence of
$\theta_{0,l}$. As the proof will show, $\theta_{0,l}$ has to be chosen as a function of certain higher H\"older norms of $W-W^{(l)}$ and $\varphi-\varphi^{(l)}$. Summing up:

\begin{proposition}\label{restart} Under the assumptions of the theorem,
\begin{equation}\label{restarteq}
\|W-W^{(l)}\|_{a+\epsilon} +\|\varphi-\varphi^{(l)}\|_{a+\epsilon}\lesssim \left(\|W-W^{(l-1)}\|_{\alpha+2+\epsilon} +\|\varphi-\varphi^{(l-1)}\|_{\alpha+2+\epsilon}\right) (\theta_{0,l-1}^\kappa+k)^{\frac{E+1+\tau}{\kappa}} .
\end{equation}
In particular, the algorithm with restart converges for appropriate $\theta_{0,l}$ depending on $W-W_0$ and $\varphi-\varphi_0$.
\end{proposition}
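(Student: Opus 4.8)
The plan is to split the statement into the one-step estimate (\ref{restarteq}) and the convergence assertion. The estimate (\ref{restarteq}) is exactly the displayed chain preceding the proposition: I would apply Theorem \ref{apriorithm} with the triple $(W^{(l-1)}, G^{(l-1)}, \varphi^{(l-1)})$ playing the role of the base solution $(W_0, G_0, \varphi_0)$, with $m=k$ inner steps and $\theta_0=\theta_{0,l-1}$, and then trade the term $\|G-G^{(l-1)}\|_{\alpha+\epsilon}$ on the right for the stronger norms $\|W-W^{(l-1)}\|_{\alpha+2+\epsilon}+\|\varphi-\varphi^{(l-1)}\|_{\alpha+2+\epsilon}$ via the Lipschitz continuity of $\Gamma:(W,\varphi)\mapsto G$ on bounded sets. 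The one point to verify here is that replacing $(W_0,G_0,\varphi_0)$ by the restart point $(W^{(l-1)},G^{(l-1)},\varphi^{(l-1)})$ is legitimate with constants uniform in $l$: since each restart point is $C^\infty$ and, by the inductive hypothesis that $d_{l-1}:=\|W-W^{(l-1)}\|_{a+\epsilon}+\|\varphi-\varphi^{(l-1)}\|_{a+\epsilon}$ is small, $\varphi^{(l-1)}$ stays in a fixed $\mathscr H^{3+\epsilon}$-neighborhood of $\varphi$, the Marussi condition (\ref{Marussi}), the non-tangency of $h$ and the smallness of the obliqueness angle — hence ellipticity and Fredholm index $0$ of the oblique Robin problem (\ref{intro:linpro}) — persist with uniform constants, and the functions $A_j$ may be chosen to depend continuously on the base point.

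For the convergence statement, I would abbreviate $\sigma:=-\tfrac{E+1+\tau}{\kappa}=\tfrac{\alpha-a-\tau}{\kappa}$, which is positive once $\tau<\alpha-a$, so that (\ref{restarteq}) reads $d_l\lesssim D_{l-1}\,(\theta_{0,l-1}^{\kappa}+k)^{-\sigma}$ with $D_{l-1}:=\|W-W^{(l-1)}\|_{\alpha+2+\epsilon}+\|\varphi-\varphi^{(l-1)}\|_{\alpha+2+\epsilon}$. The obvious obstruction — and the main difficulty — is the regularity gap: the weak norm at step $l$ is controlled by a \emph{stronger} norm at step $l-1$, so iterating (\ref{restarteq}) naively loses derivatives. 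I would close this exactly as in the Nash--Moser/H\"ormander convergence argument underlying Theorem \ref{apriorithm}: the a priori bounds in its proof hold at \emph{every} non-integer regularity level for which the data are admissible, so, assuming the exact data $(W,G)$ are smooth enough (e.g.\ $C^\infty$, as in the model problem, or of finite regularity adapted to the target level $a$), applying those bounds along a finite tower of levels above $a$ shows that the higher-norm residuals $D_l$ stay bounded by a constant $C_\ast$ depending only on the data, while $\|\varphi-\varphi^{(l)}\|_{\alpha+2+\epsilon}$ likewise does not grow.

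Granted the uniform bound $D_l\le C_\ast$, convergence follows by letting the restart parameter grow: any $\theta_{0,l}\to\infty$ gives $d_{l}\lesssim C_\ast\,(\theta_{0,l-1}^{\kappa}+k)^{-\sigma}\to 0$. To make this independent of the a priori unknown constant $C_\ast$, I would choose $\theta_{0,l-1}$ adaptively as a fixed increasing function of the higher-norm residual, e.g.\ $\theta_{0,l-1}^{\kappa}\sim (1+D_{l-1})^{1/\sigma}\,l$, which yields $d_l\lesssim(1+D_{l-1})\,(1+D_{l-1})^{-1}\,l^{-\sigma}=O(l^{-\sigma})$; this is precisely why $\theta_{0,l}$ must be selected as a function of higher H\"older norms of $W-W^{(l)}$ and $\varphi-\varphi^{(l)}$, as announced before the statement. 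Finally, interpolation between the levels $a+\epsilon$ and $\alpha+2+\epsilon$ shows $\|W-W^{(l)}\|_{\alpha+\epsilon}+\|\varphi-\varphi^{(l)}\|_{\alpha+\epsilon}\to 0$ as well, so the restart points remain in, and converge within, the neighborhood where Theorem \ref{apriorithm} applies; the Lipschitz bound for $\Gamma$ then turns this into $\|G-G^{(l)}\|_{\alpha+\epsilon}\to 0$, completing the argument.
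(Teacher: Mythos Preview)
Your proposal is correct and follows essentially the same route as the paper: the estimate \eqref{restarteq} is exactly the displayed chain of inequalities preceding the proposition (Theorem~\ref{apriorithm} applied with $(W^{(l-1)},G^{(l-1)},\varphi^{(l-1)})$ as base point, then the Lipschitz bound for $\Gamma$), and the convergence assertion is obtained, as the paper says, by iterating \eqref{restarteq} with $\theta_{0,l}$ chosen as a sufficiently rapidly increasing function of the higher H\"older norms of the residual. Your treatment of the regularity gap and the uniform-constants issue is more explicit than the paper's one-line sketch, but the underlying argument is the same.
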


Iterating \eqref{restarteq}, we can estimate $W-W^{(l)}$ and $\varphi-\varphi^{(l)}$ in terms of the initial error $W-W_0$, $\varphi-\varphi_0$.\\

The proofs of Theorem \ref{apriorithm} and Proposition \ref{restart} are contained in the appendix, Section \ref{app:proofs}.

\subsection{Numerical considerations}

From a computational perspective, we give a proof-of-principle for fully nonlinear numerical computations with the Molodensky problem. Besides showing the practicality of standard heat equation smoothing, the implementation involves subproblems of possible interest beyond the particular problem: \\

a) The updates of $\varphi$ and $h$ require the highly accurate evaluation on the surface for first and second derivatives of the gravitational field, expressed as derivatives of the single layer potential \cite{Schloemerkemper, c3, Schwab}. We show that second order finite elements are necessary. If we combine them with a partially analytic evaluation of the singular integrals for the gradient and finite differences for the Hessian, the error becomes negligible.\\

b) Surface discretization turns out to be the dominant source of error for smooth data. This shows the need for third or higher--order approximations or meshless methods. See e.g.~\cite{SauterSchwab}, Ch.~8, for boundary integral equations on surfaces approximated to degree $p$. The approximately spherical geometry suggests meshless eigenfunction expansions, and \cite{StCoTr1} contains a first study of in the case of a particular linearized problem.\\

Parallelization and nontrivial optimization are necessary to deal with the large, dense matrices of a higher-order boundary element formulation and the large number of data points available from geodesic measurements. For the linearized Molodensky problem, panel clustering and fast multipole methods have been discussed in \cite{Schwabfast}, particularly in the case of a non-oblique Robin problem.

\section{Iterative solution of the nonlinear problem}\label{ch:algo}
Our approach to the Molodensky problem is based on the abstract Nash--H\"ormander iteration \cite{Hoermander}, as adapted to the Molodensky
problem. It solves a sequence of linearized problems, whose solutions are less regular than the data, and recovers regularity with the help of a smoothing operator. To simplify notation, we restrict ourselves to a nonrotating earth, $\omega = 0$. The equivalence of the formulation to the standard Nash--Moser iteration is discussed in Appendix \ref{app:proofs}. See also \cite{costea2012} for a detailed derivation.\\

We fix a $C^\infty$ solution of the Molodensky problem $(\varphi_0, W_0, G_0)$  with a gravitational potential $v_0$ which satisfies the Marussi condition. We  assume that  the corresponding $h_0$ is not tangential to $\varphi_0(S^2)$ and that the homogeneous linearized problem (\ref{linearizedproblem}) has only the  trivial solution.\\

As stated in the introduction, the linearized Molodensky problem involves an oblique Robin problem in the exterior $\mathbb{R}^3 \backslash\bar \Omega_m$ of $\varphi_m(S^2)$. In each iteration step $m$, we solve the augmented problem:\\
Given $\dot W_m : S^2 \rightarrow \mathbb R, \dot G_m : S^2 \rightarrow \mathbb R^3, h_m:\varphi_m(S^2) \rightarrow \mathbb R^3 \quad \mbox{and} \quad \varphi_m:S^2  \rightarrow \varphi_m(S^2) \subset \mathbb R^3$,
find $u_m: \mathbb R^3\backslash \bar \Omega_m \rightarrow \mathbb R$ and constants $a_{j,m} \in \mathbb R$ such that
\begin{align}
&\Delta u_m=0 \quad \mbox{in} \quad \mathbb R^3\backslash \bar \Omega_m, \, \nonumber \\
& u_m + \nabla u_m \cdot h_m =f_m -\sum_{j=1 }^3  a_{j,m} \widetilde A_j (x)  \quad \mbox{on} \quad \varphi_m(S^2)  \label{stronglin}  \\
& u_m(x)=\frac{c}{|x|} + O(|x|^{-3}) \quad \mbox{when} \quad |x| \rightarrow \infty,\quad c \in \mathbb R \ . \nonumber
\end{align}
Again $f_m=\dot W_m \circ \varphi_m^{-1}+(\dot G_m \circ \varphi_m^{-1}) \cdot h_m$. A feasible choice for $\widetilde A_j$ is discussed after (\ref{definitionofVK}).\\ 

The solution $u_m$ determines a nonlinear correction to $\varphi_m$, $\dot \varphi_m$, given by
\begin{equation}
 \dot \varphi_m = (\nabla g_m\circ\varphi_m)^{-1}(\dot{G}_m-\nabla u_m \circ \varphi_m)\ . \label{corrphi}
\end{equation}
The gravitational potential $g_m$ is determined from the approximation to the potential as computed in the first $m$ steps,
 \begin{equation}
 w_m=W_{m-1} \circ \varphi_m^{-1} +\triangle_m u_m \quad \mbox{on} \quad \varphi_m(S^2)\ , \label{wmtotal}
\end{equation}
$$W_{m-1}=\begin{cases}v_0 \circ \varphi_0 + \triangle_0 u_0 \circ \varphi_0 + \triangle_1 u_1 \circ \varphi_1 + \dots + \triangle_{m-1} u_{m-1} \circ \varphi_{m-1},\! \quad\mbox {for} \quad\!m\geq 1\\
v_0 \circ \varphi_0  \quad\mbox {for} \quad m=0 \end{cases}  \ ,$$
for suitable stepsizes $\Delta_j$ and initial approximation $v_0$ by solving an exterior Dirichlet problem:\\
For given $w_m$ on $\varphi_m(S^2)$, find $ \overline {v}_m : \mathbb R^3\backslash \bar \Omega_m \rightarrow \mathbb R$ and constants $a_{j,m} \in \mathbb R$ such that
\begin{align}
&\Delta \overline {v}_m =0 \quad \mbox{in} \quad \mathbb R^3\backslash \bar \Omega_m, \nonumber\\
& {\overline{v}_m}_{|_{\partial {\Omega_m}}} = w_m -\sum_{j=1}^3 a_{j,m} \widetilde A_j (x)\big|_{x \in\:\varphi_m(S^2)}  \quad \mbox {on} \quad \varphi_m(S^2) \label{Dirichlet} \\
& \overline{v}_m(x)= \frac {c}{|x|} + O(|x|^{-3}) \quad \mbox {when} \quad  |x| \rightarrow \infty \ .\nonumber
\end{align}
Here  $\widetilde A_j$ need not be the same as in (\ref{stronglin}). For the numerical solution using boundary elements, we can reuse the matrix entries of  the discretized Robin problem for this Dirichlet problem.\\
Now equation (\ref{corrphi}) yields the surface update $\dot \varphi_m$ to $\varphi_m$ using $g_m=\nabla \overline{v}_m$ and $\nabla g_m=\nabla^2 \overline{v}_m$.\\

The full iterative method involves smoothing in each step based on the solution operator $S_{\theta}$ to a higher-order heat equation as discussed in Section \ref{ch:smoothing}. It reads as follows:
\begin{alg} \label{alg:nashhormsm}
\vspace*{10mm} \hrule
\vspace*{0.5mm} (Nash-H\"ormander algorithm) \vspace*{2mm}
\hrule
\begin{enumerate}
	\item For given measured data $W, G$, choose $W_0, G_0,h_0,\varphi_0, \theta_0 \gg 1, \kappa \gg 1$
  \item For $m=0,1,2,\ldots$ do
  \begin{enumerate}
        \item Compute \begin{equation}\label{thetatocompute}
                                      \theta_m=(\theta_0^{\kappa}+m)^{1/\kappa},\quad \triangle_m=\theta_{m+1}-\theta_{m}
                                    \end{equation}
         \item Compute
                 \begin{align}\label{wreccurencetosmooth}
                  \dot {\widetilde W}_0:&=S_{\theta_0}\dot W_0=S_{\theta_0}\big(\frac{W-W_0}{\triangle_0} \big) \nonumber\\
                  \dot {\widetilde W}_m:&=\frac{1}{\triangle_m}\big(S_{\theta_m}\!(W\!-W_0\! )\!-\!S_{\theta_{m-1}}\!(W \!- W_{0})\big)
                 \end{align}
        \item Compute
                 \begin{align}\label{greccurencetosmooth}
                  \dot {\widetilde G}_0:&=S_{\theta_0}\dot G_0=S_{\theta_0}\big(\frac{G-G_0}{\triangle_0} \big) \nonumber\\
                  \dot {\widetilde G}_m:&=\frac{1}{\triangle_m}\big(S_{\theta_m}\!(G\!-G_m\!+\!\! \sum_{j=0}^{m-1}\!\!\triangle_j \dot {\widetilde G}_j )\!-\!S_{\theta_{m-1}}\!(G \!- G_{m-1} +\!\! \sum_{j=0}^{m-2}\!\!\triangle_j \dot {\widetilde G}_j)\big)
                 \end{align}
        \item Find $u_m$ by solving the linearized problem (\ref{stronglin}) with $(\dot W_m, \dot G_m)$ replaced by $(\dot {\widetilde W}_m, \dot{\widetilde G}_m)$
        \item Find $\overline v_m$ by solving (\ref{Dirichlet}) with $w_m$ as defined in  (\ref{wmtotal})
        \item Compute $g_m=\nabla \overline {v}_m$ and $\nabla g_m=\nabla^2 \overline {v}_m$
        \item Compute the surface increment $\dot \varphi_m$ by $$\dot \varphi_m= (\nabla g_m \circ \varphi_m)^{-1} (\dot{\widetilde G}_m- \nabla u_m \circ \varphi_m)$$
                  and update surface map by $\varphi_{m+1}=\varphi_m + \triangle_m \dot \varphi_m$
        \item Update direction vector and gravity potential by
     \begin{align*}
      h_{m+1}&=((-(\nabla g_m)^{-1} g_m) \circ \varphi_m) \circ ({\varphi_{m+1}})^{-1} \\
      G_{m+1}&=g_m \circ \varphi_m
     \end{align*}
      \item Stop if $\left\| g_m\circ \varphi_m-G \right\| +\left\| \overline v_m\circ \varphi_m-W \right\|< \text{tol}$
    \end{enumerate}
\end{enumerate}
\hrule
\end{alg}
$\|\cdot\|$ might usually be chosen to be e.g.~an $\mathcal{H}^a$--norm.\\

We also consider a variant of the algorithm which is restarted every $k$ steps, using as initial condition the approximate solution from the $k$-th step:

\begin{alg} \label{alg:nashhormsmrest}
\vspace*{10mm} \hrule
\vspace*{0.5mm} (Nash-H\"ormander algorithm with restart) \vspace*{2mm}
\hrule
\begin{enumerate}
      \item For given measured data $W, G$ and $k \in \mathbb{N}$, choose $W_0, G_0,h_0,\varphi_0, \theta_0 \gg 1, \kappa \gg 1$

     \item Compute $W_k, G_k, \varphi_k$ in Algorithm \ref{alg:nashhormsm}

      \item Stop if $\left\| G_k-G \right\| +\left\| W_k-W \right\|< \text{tol}$

 \item Else set $W_0= W_k, G_0=G_k, h_0 = h_k,\varphi_0=\varphi_k$, choose $\kappa, \theta_0$, and go to 2
\end{enumerate}
\hrule
\end{alg}
\vspace{0.5cm}

To solve the homogeneous exterior Robin and Dirichlet problems (\ref{stronglin}) resp.~(\ref{Dirichlet}), we use  a single layer potential ansatz for $u_m=V_m \mu_m$ and satisfy the decay condition at $\infty$ in a weak sense.
Boundary element formulations for the oblique Robin problem (\ref{stronglin}) with general boundary data were first analyzed in  \cite{Schwabfast}. They, in particular, showed the relevance of multipole expansions and panel clustering
for the fast solution for experimental geodesic data.\\

The ansatz transforms (\ref{stronglin}) into a saddle point problem for the integral operator
\begin{equation}\label{definitionofS}
\mathcal{S}=V+\frac{1}{2}\cos (\measuredangle (\boldsymbol n, \boldsymbol h))I + K'(\boldsymbol h) ,
\end{equation}
on $\varphi_m(S^2)$, which is defined in terms of the multilayer potentials
\begin{equation}\label{definitionofVK}
V \mu(x) =  \int_\Gamma \frac{\mu(y)}{4 \pi |x-y|} \;ds_y\ , \quad K^\prime(\boldsymbol h) \mu(x) = \boldsymbol h \cdot \nabla \int_\Gamma \frac{\mu(y)}{4 \pi |x-y|}   \;ds_y \ .
\end{equation}
In particular for $\boldsymbol h=\boldsymbol n$ the unit normal vector, $K'(\boldsymbol n)$ is the standard adjoint double layer potential. \\

Solving the Robin problem for arbitrary $\dot W, \dot G$ requires an appropriate choice of $\widetilde A_j$. Let $A_j=\frac{x_j}{|x|^3}$ and  $\mathcal N := \mbox{span}\,\lbrace A_j\rbrace_{j=1,\dots,3}$.
Then $u_m=V_m \mu_m$ satisfies the decay condition
of  (\ref{stronglin}) whenever $\mu_m \in L^2(\varphi_m(S^2)) \cap \mathcal N^{\perp}$. Since $f_m =  \dot W_m \circ \varphi_m^{-1}+(\dot G_m \circ \varphi_m^{-1}) \cdot h_m\in L^2(\varphi_m(S^2))$,
but not necessarily in $S( L^2(\varphi_m(S^2)) \cap \mathcal N^{\perp})$, $\widetilde A_j$ must be chosen such that
$\mbox{span}\,\lbrace \widetilde A_j \rbrace_{j=1}^3 + S(L^2(\varphi_m(S^2)) \cap \mathcal N^{\perp})=L^2(\varphi_m(S^2))$ for (\ref{stronglin}) to be well defined. $\widetilde A_j:= S A_j|_{\varphi_m(S^2)}$ is a feasible choice
and leads to an equivalent variational formulation of (\ref{stronglin}):\\
Find $(\mu_m,a_m) \in L^2(\varphi_m(S^2)) \times \mathbb R^3$ such that
  \begin{equation}\label{Wadbif:weakform1}
   \begin{alignedat}{2}
        \langle \mathcal{S} \mu_m, \phi \rangle_{\varphi_m(S^2)}+\langle \mathcal{S}  \sum_{j=1}^3 a_{j,m} A_j, \phi \rangle_{\varphi_m(S^2)} &=\langle f_m,\phi \rangle_{\varphi_m(S^2)} \quad &\forall \phi \in L^2(\varphi_m(S^2))\\
        \langle \mu_m,A_k \rangle_{\varphi_m(S^2)} &=0  \quad &\forall k \in \{1,2,3\}\ .
      \end{alignedat}
   \end{equation}
Note that any $L^2$--solution of the weak problem is actually $C^\infty$. Indeed, $\mathcal{S}$ is an elliptic pseudodifferential operator, so that $\mu_m \in L^2$ and $\mathcal{S} \mu_m = f_m - \mathcal{S}  \sum_{j=1}^3 a_{j,m} A_j \in C^\infty$ in the distributional sense implies $\mu_m \in C^\infty$ \cite{Seeley}.\\

The Dirichlet problem (\ref{Dirichlet}) is similarly reformulated:
Find $(\tilde {\mu}_m, {\widetilde a}_m) \in H^{-1/2}(\varphi_m(S^2)) \times  \mathbb R^3$ such that
  \begin{equation}\label{Wadbif:weakform2}
   \begin{alignedat}{2}
        \langle V \tilde{\mu}_m, \xi \rangle_{\varphi_m(S^2)}+\langle V  \sum_{j=1}^3 {\widetilde a}_{j,m} A_j, \xi \rangle_{\varphi_m(S^2)} &=\langle {w_m},\xi \rangle_{\varphi_m(S^2)}  \forall \xi \in H^{-1/2}(\varphi_m(S^2))\\
         \langle \tilde{\mu}_m,A_k \rangle_{\varphi_m(S^2)} &=0   &\forall k \in \{1,2,3\}\ .
      \end{alignedat}
   \end{equation}
On the right hand side,
\begin{align}\label{rhsdirichletimp}
\langle w_m,\xi \rangle_{\varphi_m(S^2)} =& \langle W_{m-1} \circ \varphi_m^{-1},\xi \rangle_{\varphi_m(S^2)} + \triangle_m \langle u_m,\xi\rangle_{\varphi_m(S^2)} \nonumber \\
=&\langle (v_0 \circ \varphi_0)\circ \varphi_m^{-1},\xi  \rangle_{\varphi_m(S^2)} + \sum_{i=0}^{m-1}  \triangle_i \langle (V_i \mu_i \circ \varphi_i) \circ \varphi_m^{-1},\xi \rangle_{\varphi_m(S^2)}\nonumber\\
&+\triangle_m \langle V_m \mu_m,\xi \rangle_{\varphi_m(S^2)}\nonumber.
\end{align}
As discussed in Section \ref{ch:nummeth}, we use standard boundary element methods for the efficient numerical solution of (\ref{Wadbif:weakform1}) and (\ref{Dirichlet}).
Note that the matrix elements of $V$ have already been computed for the Robin problem.

\section{Smoothing operator}\label{ch:smoothing}

The smoothing operators $S_\theta$ appearing in the iterative solution of the Molodensky problem compensate for the loss of derivatives in the increments of $\varphi$.
For a theoretical analysis, smoothing operators $S_\theta$ defined from compactly supported functions $\phi \in C_0^{\infty}(\mathbb R)$ are most convenient. If $M$ is a submanifold of $\mathbb{R}^3$, we define
$S_\theta^{th}: \mathscr{H}^a(M) \to \mathscr{H}^a(M)$ as
\begin{equation*}
 S_\theta^{th} u := \phi(\frac{1}{\theta^2} \Delta_M) u\ .
\end{equation*}
Here $\Delta_M$ is the Laplace--Beltrami operator associated to the metric which $M$ inherits from $\mathbb{R}^3$, and $\phi(\frac{1}{\theta^2} \Delta_M) u$ can be computed by expanding $u$ into $L^2(M)$--eigenfunctions of $\Delta_M$.\\

These operators have the following properties (Theorem A.10, \cite{Hoermander}):
\begin{proper}\label{propersmoothlikehoer} For all $u \in C^{\infty}(M)$ we have
  \begin{itemize}
  \item[(0)] $\|S_\theta^{th} u -u\|_a \stackrel{\theta \rightarrow \infty }{\longrightarrow} 0 ;$
   \item[(i)] $\|S_\theta^{th} u\|_b \leq C \|u\|_a, \quad b \leq a ;$
    \item[(ii)] $\|S_\theta^{th} u\|_b \leq C \theta^{b-a}\|u\|_a,\quad a \leq b ;$
     \item[(iii)] $\|u-S_\theta^{th} u\|_b \leq C \theta^{b-a}\|u\|_a,\quad b \leq a ;$
     \item[(iv)] $\bigg\|\frac{d}{d\theta} S_\theta^{th} u\bigg\|_b \leq  C \theta^{b-a-1}\|u\|_a$.
 \end{itemize}
\end{proper}
The oscillatory nature of the corresponding integral kernels renders a stable implementation of these operators difficult. In practice, solution operators to heat-like equations are frequently used as smoothing operators in a variety of contexts such as high-dimensional statistics, image
processing etc.~with readily available, optimized implementations. They have been also used in the numerical explorations of smoothed Newton and similar methods, though without much justification, see e.g \cite{Jeromeheatsm, Jerome}.\\
More generally than the heat equation, we consider smoothing operators associated to $\phi(x)=e^{-|x|^{k}}$, $k \in \mathbb{N}$. They correspond to the time--$1/\theta^{2k}$ solution of the $k$--harmonic heat equation
\begin{alignat*}{2}
\frac{d}{d t} v(x,t) - A v(x,t)&=0& &\,\,\,\mbox{in} \quad M \times (0, \infty) \\
v(x,0) &=u(x)&   &\,\,\,\mbox{in} \quad  M
\end{alignat*}
with $A:=(-1)^{k+1} \Delta_M^k$ and $u \in \mathscr {H}^a$. Considering $A:\mathscr {H}^{a+2k}\subset \mathscr {H}^{a} \rightarrow \mathscr {H}^{a}$ as an unbounded operator on the H\"older spaces ($a>0, a \notin \mathbb N$) we have the following theorem.
\begin{theorem}\label{heatthem}
$A$ generates an analytic semigroup $e^{tA}$ on $\mathscr {H}^a$, and the operator $S_\theta:=e^{\frac{1}{\theta^{2k}}A}$ satisfies the properties (0), (i), (ii), (iv) (with $S_\theta^{th}$ replaced by $S_\theta$) and in addition
\begin{equation*}
(iii') \qquad  \|u- S_\theta u\|_b \leq C \theta^{b-a} \|u\|_a, \quad \forall\:0 \leq a-b < 2k.
\end{equation*}
\end{theorem}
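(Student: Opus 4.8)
The plan is to reduce the statement to the functional calculus of the Laplace--Beltrami operator and to classical facts about analytic semigroups in spaces of H\"older continuous functions. Since $M$ is a compact surface, $L:=-\Delta_M$ is a positive, strongly elliptic operator of order $2$ with smooth coefficients, with purely discrete spectrum $0=\lambda_0<\lambda_1\le\lambda_2\le\cdots$ and a spectral gap $\lambda_1>0$; it is sectorial on every $\mathscr H^a(M)$ with spectrum in $[0,\infty)$, hence so is its $k$--th power $L^k$ (spectrum $\{\lambda_j^k\}\subset[0,\infty)$), and therefore $A=(-1)^{k+1}\Delta_M^k=-L^k$ generates a bounded analytic semigroup $e^{tA}$ on $\mathscr H^a(M)$ for every non-integer $a$. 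This is the higher order analogue of the classical fact (Stewart; via Schauder estimates, Lunardi) that strongly elliptic operators generate analytic semigroups on H\"older spaces. Elliptic (Schauder) regularity for the $2k$--th order operator $A$ identifies $\mathcal D(A)$ with $\mathscr H^{a+2k}(M)$, with equivalent graph norm, and the associated real interpolation spaces satisfy $(\mathscr H^{a_0},\mathscr H^{a_1})_{\rho,\infty}=\mathscr H^{(1-\rho)a_0+\rho a_1}$ for $a_0<a_1$ and $0<\rho<1$, a classical identity for the H\"older--Zygmund scale, valid for all real indices and in particular at the non-integer exponents used throughout. These two facts are the only input specific to $M$; everything else is run exactly as in H\"ormander's proof of Theorem A.10.

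Properties (0), (i), (ii), (iv) then follow from the standard analytic semigroup estimates. One has $\|e^{tA}\|_{\mathscr H^a\to\mathscr H^a}\le C$ for $t\ge 0$ and $\|A e^{tA}\|_{\mathscr H^a\to\mathscr H^a}\le C t^{-1}$, and combining the latter with the interpolation identification gives the smoothing bound $\|e^{tA}\|_{\mathscr H^a\to\mathscr H^b}\le C t^{-(b-a)/(2k)}$ for $b\ge a$ (first for $b-a<2k$ directly from interpolation, then for $b-a\ge 2k$ by composing $e^{tA}$ with itself through intermediate non-integer exponents). Setting $t=\theta^{-2k}$ yields (ii); (i) is uniform boundedness together with $\|u\|_b\lesssim\|u\|_a$ for $b\le a$; (0) follows for $u\in C^\infty\subset\mathcal D(A)$ from
\[
\|S_\theta u-u\|_a=\Big\|\int_0^{\theta^{-2k}}e^{sA}(Au)\,ds\Big\|_a\le \theta^{-2k}\sup_{0\le s\le\theta^{-2k}}\|e^{sA}(Au)\|_a\lesssim \theta^{-2k}\|Au\|_a ,
\]
which tends to $0$ as $\theta\to\infty$; and (iv), using $\tfrac{d}{d\theta}S_\theta=-2k\,\theta^{-2k-1}A e^{\theta^{-2k}A}$ together with $\|A e^{tA}\|_{\mathscr H^a\to\mathscr H^b}\le C t^{-1-(b-a)/(2k)}$, follows as for (ii). (Strictly, this last bound, and hence (iv), is obtained in the range $a-b<2k$, which is the range relevant to the iteration once the order $k$ of the smoothing is fixed large enough.)

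It remains to prove the modified estimate $(iii')$, which is the new point. The restriction $a-b<2k$ cannot be dropped: for the compactly supported cutoff $\phi$ underlying $S^{th}_\theta$ one has $1-\phi\equiv 0$ near $0$, so $I-S^{th}_\theta$ is smoothing to all orders, whereas $1-e^{-(\lambda/\theta^{2})^{k}}\sim(\lambda/\theta^{2})^{k}$ as $\lambda\to 0$, so $I-S_\theta$ gains exactly $2k$ derivatives. To make this quantitative, write for $u\in\mathscr H^a$
\[
u-S_\theta u=-\int_0^{\theta^{-2k}}\tfrac{d}{ds}e^{sA}u\,ds=-\int_0^{\theta^{-2k}}A e^{sA}u\,ds ,
\]
and estimate the integrand in $\mathscr H^b$, now taking $\mathscr H^b$ as the base space, so that $\mathcal D(A)=\mathscr H^{b+2k}$. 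For $0<a-b<2k$ the hypothesis says precisely that $\mathscr H^a=(\mathscr H^b,\mathscr H^{b+2k})_{\rho,\infty}$ with $\rho=(a-b)/(2k)\in(0,1)$, and the standard characterization of this interpolation space in terms of the semigroup gives $\|A e^{sA}u\|_b\le C s^{\rho-1}\|u\|_a$, which is integrable at $s=0$ because $\rho>0$. Hence
\[
\|u-S_\theta u\|_b\le C\int_0^{\theta^{-2k}}s^{\rho-1}\,ds\;\|u\|_a=\frac{C}{\rho}\,\theta^{-2k\rho}\|u\|_a=C'\,\theta^{b-a}\|u\|_a ,
\]
since $2k\rho=a-b$; the boundary case $a=b$ is just the uniform boundedness of $I-S_\theta$ on $\mathscr H^a$.

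I expect the main obstacle to be $(iii')$: one has to set up the correct interpolation picture --- in particular to \emph{base the estimate at $\mathscr H^b$} rather than at $\mathscr H^a$, and to recognize that membership of $u$ in $(\mathscr H^b,\mathscr H^{b+2k})_{\rho,\infty}$ is exactly the regularity that makes $\int_0^{\theta^{-2k}}\|A e^{sA}u\|_b\,ds$ converge with the right power of $\theta$. The remaining work is to check carefully that the classical generation and interpolation results (Stewart, Lunardi, Triebel) transfer verbatim to the $2k$--th order operator $A=-(-\Delta_M)^k$ on the H\"older--Zygmund scale over the compact surface $M$; this is routine but must be stated explicitly, since it is precisely here that the non-saturation property (iii) of $S^{th}_\theta$ is lost and replaced by the weaker $(iii')$.
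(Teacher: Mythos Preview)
Your proposal is correct. For properties (0), (i), (ii) and the generation of the analytic semigroup, your argument and the paper's are essentially identical: both invoke sectoriality of $A=-(-\Delta_M)^k$ on the H\"older scale, the standard smoothing bound $\|A e^{tA}\|\le C t^{-1}$, and interpolation between the endpoints $\mathscr H^a$ and $\mathscr H^{a+2kl}$.

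The genuine difference is in the proof of $(iii')$. The paper works directly with the Dunford contour integral: it writes
\[
u-e^{tA}u=\frac{1}{2\pi i}\int_{\gamma_{r,\eta}}(1-e^{t\lambda})(A-\lambda)^{-1}(1-A)^{-(a-b)/(2k)}v\,d\lambda,\qquad v=(1-A)^{(a-b)/(2k)}u,
\]
inserts $1-e^{t\lambda}=-\lambda\int_0^t e^{\lambda s}\,ds$, and estimates the resulting double integral by splitting the contour into a bounded arc and two rays, getting the power $t^{(a-b)/(2k)}$ from an explicit change of variables on the rays. Your route is more abstract: you use $u-e^{tA}u=-\int_0^t Ae^{sA}u\,ds$ together with the semigroup characterization of the real interpolation space $D_A(\rho,\infty)=(\mathscr H^b,\mathcal D(A))_{\rho,\infty}$, namely $\sup_s s^{1-\rho}\|Ae^{sA}u\|_b<\infty$, and then identify this with $\mathscr H^a$ via $\mathcal D(A)\simeq\mathscr H^{b+2k}$ and the reiteration property of the H\"older--Zygmund scale. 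Your argument is shorter and makes the structural reason for the restriction $a-b<2k$ transparent (it is exactly the condition $\rho<1$ needed for integrability of $s^{\rho-1}$ at $0$). The paper's contour computation is more self-contained, avoiding the black-box identification $D_A(\rho,\infty)=\mathscr H^{b+2k\rho}$, at the cost of a longer calculation. Your parenthetical remark that property (iv) in the direction $b<a$ is likewise obtained only for $a-b<2k$ is a point of care the paper does not make explicit.
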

A proof of this theorem will be given in the Appendix. In particular, we note that these restricted properties are sufficient to rigorously analyze the convergence of the algorithms presented in Section \ref{ch:algo}.

\section{Numerical methods} \label{ch:nummeth}

The main effort to numerically solve the Molodensky problem consists in the accurate solution of the saddle point problems (\ref{Wadbif:weakform1}) and (\ref{Wadbif:weakform2}) for the multilayer potentials $\mathcal{S}$ and $V$ on
the surfaces $\varphi_m(S^2)$.

For the solution, we choose a quasi--uniform triangulation $\mathcal T_h^m$ of $\varphi_m(S^2)$ by plane triangles of diameter $\sim h$ and solve the saddle point problems for piecewise polynomial elements on the resulting discretized surface.

In the case of (\ref{Wadbif:weakform1}), the discretized formulation for the subspace $$S_{h,m}^p=\{\mbox{space of discontinuous piecewise polynomials of degree $p$ on } \mathcal T_h^m \}=\mbox{span} \{b_j \}_{j=1}^N$$ of $L^2(\mathcal T_h^m)$ reads as follows:
Find $(\mu_{m,h}, a_{m,h}) \in S_{h,m}^p \times \mathbb R^3$ such that
  \begin{equation}\label{Wadbif:discreteform1}
   \begin{alignedat}{2}
        \langle \mathcal{S} \mu_{m,h}, \phi_h \rangle_{\mathcal T_h^m}+  \sum_{j=1}^3 a_{j,m,h} \langle \mathcal{S}A_j, \phi_h \rangle_{\mathcal T_h^m} &=\langle f_m,\phi_h \rangle_{\mathcal T_h^m}  \quad \forall \phi_h \in S_{h,m}^p\\
        \langle \mu_{m,h},A_k \rangle_{\mathcal T_h^m} &=0 \qquad \forall k \in\{1,2,3\}
      \end{alignedat}
   \end{equation}
Expanding $\mu_{m,h}$ and $\phi_{m,h}$ in the basis $b_j$,
\begin{equation*}
\mu_{m,h} =\sum_{j=1}^N \mu_{j,m} b_j, \quad \phi_{m,h}
=\sum_{j=1}^N \phi_{j,m} b_j \ ,
\end{equation*}
we obtain a matrix equation of the form:
\[
\begin{bmatrix}  \mathcal{S} & \widetilde{\mathcal{S}} \\ \varLambda & 0 \end{bmatrix} \begin{bmatrix} \vec {\mu}_h \\ \vec {a}_h \end{bmatrix}=\begin{bmatrix} \vec f \\ \vec { 0} \end{bmatrix}
\]
where $(\mathcal{S}_{kj})= \langle \mathcal{S} b_j,b_k \rangle,\ (\widetilde{\mathcal{S}_{kj}})= \langle \mathcal{S} A_j, b_k \rangle, (\varLambda_{kj})=\langle b_j,A_k \rangle, (f_k)= \langle f_m,b_k \rangle$ and $ \vec \mu \in \mathbb R^{N}, \vec a \in \mathbb R^3$.\\

For the Dirichlet problem (\ref{Dirichlet}) we consider $S_{h,m}^p$ as a subspace of $H^{-1/2}(\mathcal T_h^m)$, leading to the problem:
Find $(\tilde {\mu}_{m,h},  {\widetilde a}_{m,h}) \in S_{h,m}^p \times \mathbb R^3$ such that
  \begin{equation}\label{Wadbif:discreteform2}
   \begin{alignedat}{2}
        \langle V \tilde{\mu}_{m,h}, \xi_h \rangle_{\mathcal T_h^m}+ \sum_{j=1}^3  {\widetilde a}_{j,m,h} \langle V A_j, \xi_h \rangle_{\mathcal T_h^m} &=\langle w_m,\xi_h \rangle_{\mathcal T_h^m}  \quad \forall \xi_h \in \mathcal T_h^m\\
      \langle \tilde{\mu}_{m,h},A_k \rangle_{\mathcal T_h^m} &=0 \qquad \qquad \forall k\in\{1,2,3\}\ .
      \end{alignedat}
   \end{equation}
In matricial form:
\[
\begin{bmatrix}  V & \widetilde V  \\ \Lambda  & 0 \end{bmatrix} \begin{bmatrix} \vec {\widetilde {\mu}}_h \\ \vec {\widetilde {a}}_h \end{bmatrix}=\begin{bmatrix} \vec{w}_m \\ \vec { 0} \end{bmatrix}.
\]

The suggested numerical solution of the linearized Molodensky and Dirichlet problems is stably convergent with quasi--optimal convergence estimates. They are derived in \cite{Schwabfast, costea2012} by verifying an inf--sup condition for the boundary element spaces.\\

Our theoretical analysis legitimizes using the solution of heat-like equations on $\varphi_m(S^2)$ as smoothing operator $S_{\theta}$ in the algorithm. We use the implementation presented in \cite{Seo:2010:HKS:1926877.1926943}, which computes an FEM-solution of the heat equation for the geometric Laplace--Beltrami operator by spectral methods. $S_\theta F$ is defined as the solution at time $\theta^{-2}$ with initial condition $F$.

\subsection{Details}
The initial sphere $ S^2$ is triangulated by starting with an icosahedral mesh and subsequent refinements \cite{01technicalmanual}. Assigning to each node a vector in $\mathbb{R}^3$ corresponds to a continuous, piecewise linear representation of $\varphi_m$. As we need to evaluate second derivatives of the gravitational potential, the polynomial degree on each triangle is $p=2$, and $h_m$ is represented by a discontinuous piecewise constant function interpolating the $h_m$ from equation (\ref{stronglin}) in the midpoints of each triangle. Furthermore, $G_m$ is the linear interpolation of $g|_{\varphi_m( S^2)}$, obtained from equation (\ref{Dirichlet}), in the nodes.
The local basis functions $b_j$ are monomials for both the linearized Molodensky problem and for the auxiliary Dirichlet problem. We use analytic expressions to compute $\langle V b_j, b_k\rangle$ and $K(h) b_k$ (\cite{MaischakTech}) and perform an hp-composite Gauss quadrature with geometrically graded meshes \cite{Erichsen,Schwab} to determine $\langle K'(h) b_j, b_k\rangle = \langle b_j, K(h) b_k\rangle$. $\langle V A_j,b_k \rangle$ and $\langle K'(h) A_j,b_k \rangle$ are treated analogously.\\

The update of $\varphi$ requires the delicate computation of the Hessian for the gravitational potential, $\nabla g_m$, on $\varphi_m(S^2)$. Our numerical experiments suggest to derive the component of $g$ in some direction $e$ using the adjoint of the double layer potential, $e \cdot g_m = \frac{e\cdot \nu}{2} \mu_{m,h} + K'(e) \mu_{m,h}$, and to use finite differences to compute $\nabla g_m$. More precisely, for the tangential components we use a central finite difference scheme, $U^\prime(x) \approx \frac{U(x+\delta)-U(x-\delta)}{2\delta}$, in the exterior domain and extrapolate to the boundary. If $x'$ is the point closest to $x$ on the discretized boundary, we choose $\delta$ smaller than the distance of $x'$ to the nearest vertex of the triangulation. For the normal component we combine the central finite difference scheme with a Crank-Nicolson method, $U^\prime(x) \approx \frac{4U(x+\delta)-3U(x)-U(x+2\delta)}{2\delta}$, which is forward oriented. The error in both schemes is of order $\delta^2$.\\

Alternative approaches are discussed in detail in Section \ref{seondderiv}.

\section{Numerical experiments}
\label{numex}

The numerical experiments were carried out on a cluster  with 5 nodes \`{a} 8 cores  with  2.93Ghz and 48GB memory,
where each core uses two Intel Nehalem X5570 processors. With parallelization and optimization we need $20$ minutes for each iterations in the case of icosahedron refinements corresponding to $320$ triangles ($N=2$), whereas we need $3$ hours for refinements corresponding to $1280$ triangles ($N=3$). A further global refinement would lead to $5120$ triangles and $16$ times larger dense matrices. For realistic models of the gravity field used in geodesy, the data is often given at grid points spaced less than 10 degrees apart, corresponding e.g.~in \cite{Schwabfast} to triangulations with between 2048 and 131072 triangles. This shows the need to develop local adaptive refinements and matrix compression. Local refinements can be introduced into the restarted Algorithm $2$ every $k$ steps.

\subsection{Computation of second derivatives}\label{seondderiv}

The accurate determination of second derivatives of the gravitational potential on the surface is crucial for our approach and is of interest also in other contexts.
The first derivatives of the single layer potential on the boundary have been analyzed in \cite{Schloemerkemper}. They can be evaluated by a composite hp Gauss quadrature
with geometrical grading towards the singularity \cite{Erichsen,Schwab}. Higher derivatives have been analyzed by Schulz, Schwab and Wendland, e.g.~in \cite{SchwabWendland, SchulzSchwabWendland}, who compute the second normal derivative from the less singular tangential derivatives in a similar way.
For the current problem, a simpler approach gives sufficient accuracy.\\
We investigate a general Dirichlet problem
\begin{subequations}
\begin{alignat}{2}
-\Delta u &= 0 &\quad &\text { in } \quad  \mathbb{R}^d \setminus \overline{\Omega} \\
u&=f &\quad &\text { on } \quad \Gamma:=\partial \Omega
\end{alignat}
\end{subequations}
with an appropriate decay condition. Using a single layer potential ansatz, the problem is equivalent to the integral equation $V\mu = f$, where
\begin{align}
u(x)=V \mu(x)=\int_\Gamma k(x,y) \mu(y) \;ds_y\ , \quad k(x,y)=\begin{cases} -\frac{1}{2\pi}\log \|x-y \|, & \quad  d=2\\ \frac{1}{4\pi} \frac{1}{\|x-y\|}, & \quad d=3. \end{cases}\label{eq:kernelDef}
\end{align}
As in Section \ref{ch:nummeth}, we solve the discretized Galerkin equations on the discretized boundary $\Gamma_h$,
\begin{align} \label{eq:discreteFormulation}
 \langle V \mu_h,\xi  \rangle_{\Gamma_h}=\langle g,\xi \rangle_{\Gamma_h} \quad \forall \: \xi \in S_{h,\Gamma}^p \ ,
 \end{align}
in the subspace $S_{h,\Gamma}^p \subset H^{-1/2}(\Gamma_h)$ of piecewise polynomials of degree $p$.
To approximate the Hessian of $u$, we have to evaluate the Hadamard finite-part integral
\begin{align} \label{eq:parfinieIntegral}
 \nabla \nabla V \mu_h(x)= \text{p.f.} \int_{\Gamma_h} \nabla_x \nabla_x k(x,y) \mu_h(y) \;ds_y \qquad (x \in \Gamma_h)\ .
\end{align}

The hypersingular kernel $\nabla_x \nabla_x k(x,y)$ is the main challenge in evaluating the potential. We analytically compute the gradient $w = \nabla V \mu_h$ using the adjoint double layer potential. The second derivative $\nabla w$
is approximated by second--order accurate finite differences (FD):
\begin{align*}
\frac{\partial w(x)}{\partial n}&=\frac{4w(x + \delta \cdot n) - 3w(x) - w(x + 2\delta \cdot n) }{2\delta} + O(\delta^2) \\
\frac{\partial w(x)}{\partial t} &=\frac{w(x+\delta \cdot t) -w(x-\delta \cdot t) }{2\delta}  + O(\delta^2)
\end{align*}

In the computations, the step size $\delta$ is set to $10^{-4}$ for the normal component and to $10^{-5}$ for the tangential component when approximating second derivatives.
For the presented numerical experiments the FD-approximation error is of magnitude $10^{-7}$ if no Galerkin-BEM  approximation error were to occur.
However, for very small step sizes the finite differences become numerically instable and the BEM-error is dominating.
If $H=(H_{ij})$ denotes the exact and $H_h=(H_{h,ij})$ the approximated Hessian, we measure the error in a point $x$ as $\left(\sum_{i,j} (H_{h,ij}(x)-H_{ij}(x))^2\right)^{1/2}$.
The BEM-approximation error is measured in the energy norm  $\|\mu-\mu_h\|_V^2:=\langle V(\mu-\mu_h),\mu-\mu_h\rangle_{\Gamma_h}$.
\begin{example}
Let $\Omega=\left[-\frac{1}{2},\frac{1}{2}\right]^2$ be the domain and $u=\ln \|x\|$ the exact solution. Then the exact Hessian is $H(x)= \frac{1}{x^2} \left(
\begin{array}[pos]{c c}
	1-2x_1^2 & -2x_1x_2 \\ -2x_1x_2 & 1-2x_2^2
\end{array}\right)$.
Figure~\ref{fig:hessian2d_rand} shows the pointwise error of the Hessian approximation in the point $x=(\frac{1}{2},\frac{1}{3})$ for $h$--versions of BEM with polynomial degree $p=0,1,2,3$, as well as for a $p$--version with $h=0.2$.
Figure~\ref{fig:errorbem2d} displays the corresponding BEM-error $\left\|\mu-\mu_h\right\|_V$ between $\mu_h$ and an extrapolated density $\mu$. All versions show their characteristic rate of convergence, i.e.~$1.5$, $2.5$, $3.5$, $4.5$ for the $h$-versions and exponential for the $p$-version until the error is about $10^{-8}$ at which point the quadrature errors for the outer integration in the semi-analytic evaluation of (\ref{eq:discreteFormulation}) dominate the BEM-error with
analytic computation of the involved integrals.
\end{example}
\begin{figure}[h!]
	\centering
		\includegraphics[width=120mm, keepaspectratio]{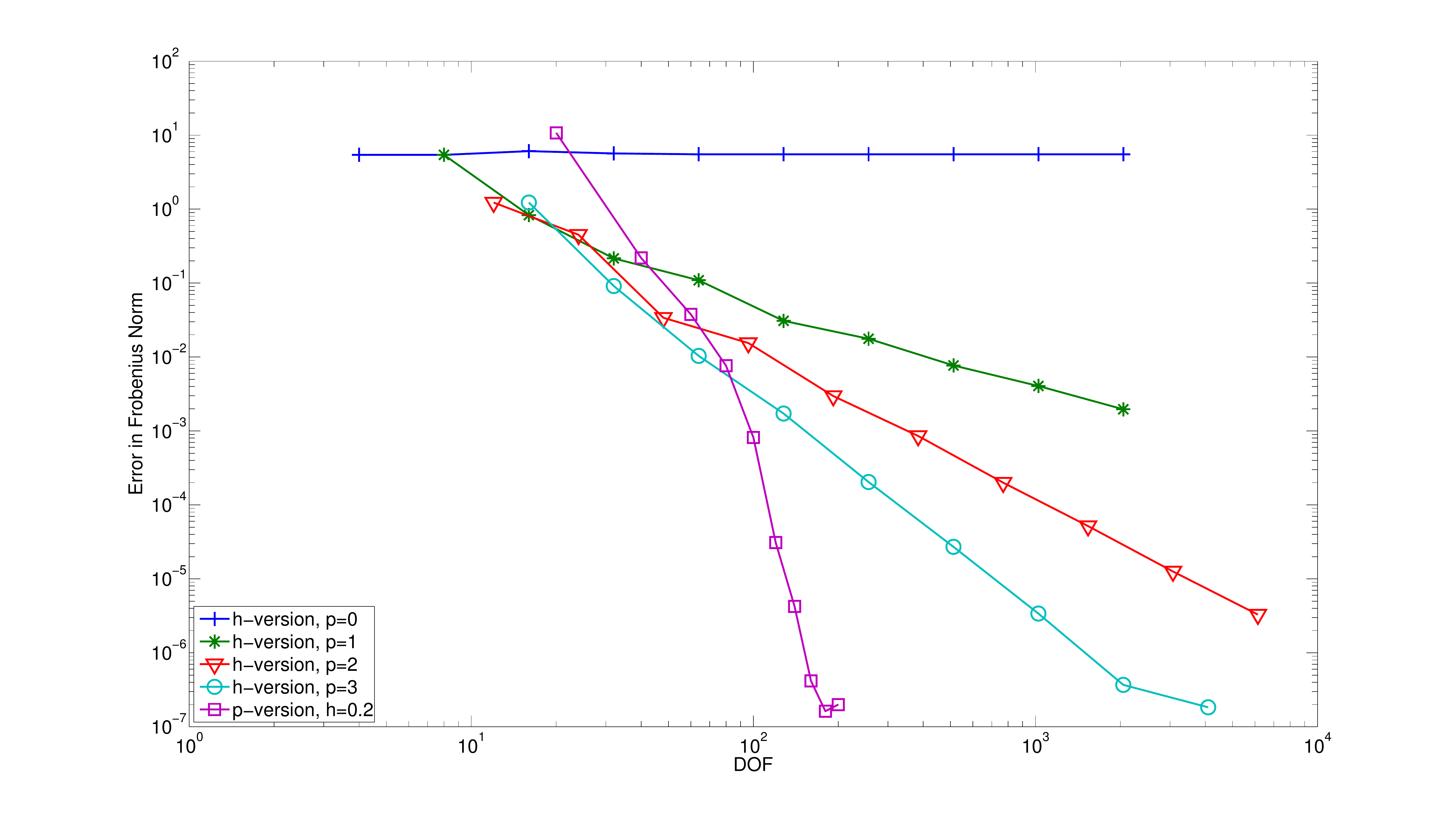}
		\caption{Error of the Hessian approximation for a point on $\Gamma_h$}
	\label{fig:hessian2d_rand}
\end{figure}

\begin{figure}[h!]
	\centering
	\includegraphics[keepaspectratio,width=120mm]{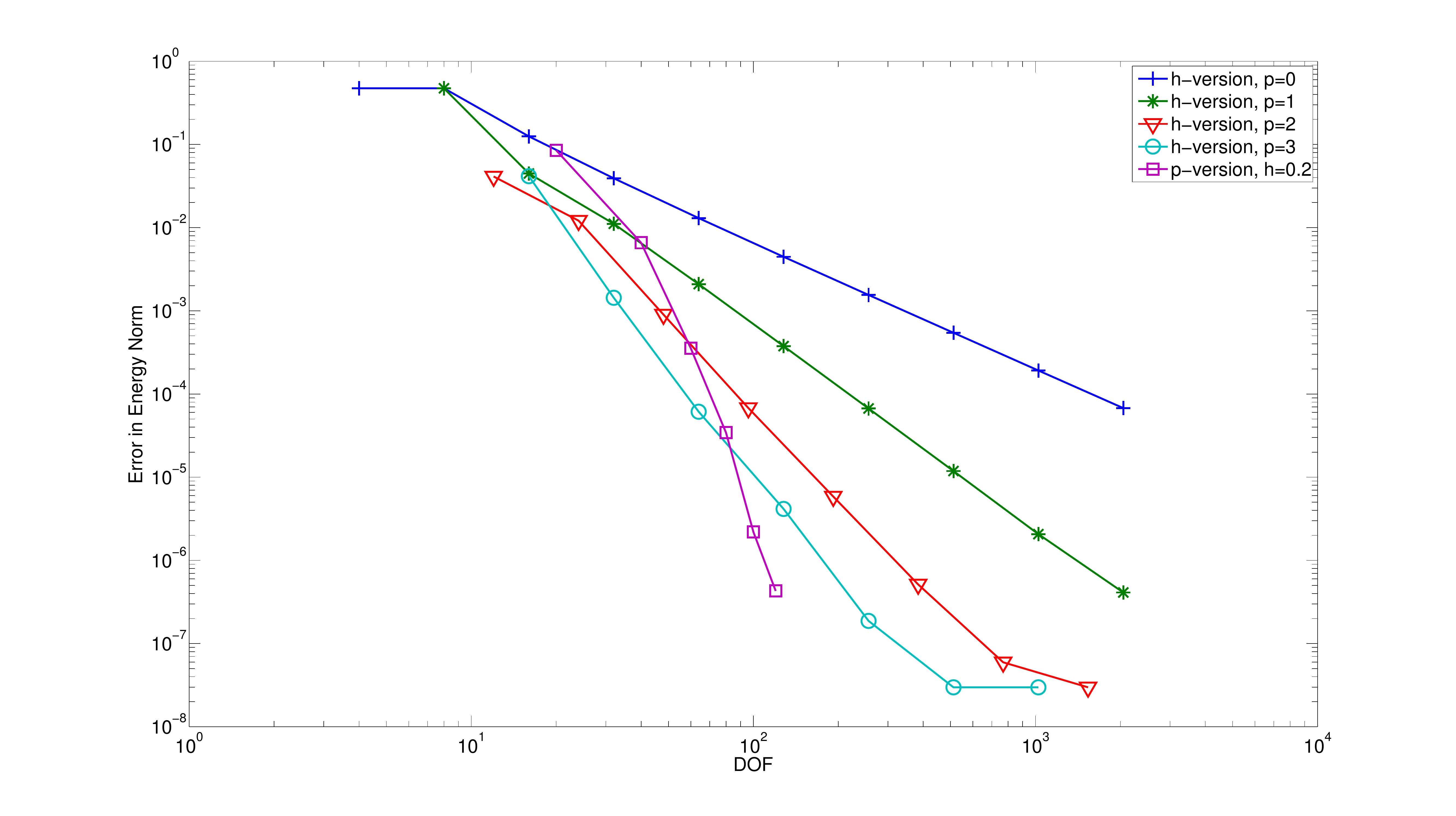}
	\caption{BEM-Error $\|\mu -\mu_h\|_V$ in the energy norm for the 2d case}
	\label{fig:errorbem2d}
\end{figure}
\begin{example}
Let $\Omega=[-1,1]^3$ be the domain and $g$ corresponding to the exact solution $u(x)=\frac{1}{\left\|x\right\|}$ with Hessian $H(x)= \frac{3}{\left\|x\right\|^5} \left(
\begin{array}[pos]{c c c}
	x_1^2 & x_1x_2 & x_1x_3 \\ x_1x_2 & x_2^2 & x_2x_3 \\ x_1x_3 & x_2x_3 & x_3^2
\end{array}\right)-\frac{1}{\left\|x\right\|^3} I$.
Figure~\ref{fig:errorbem3d} displays the BEM-error $\left\|\mu-\mu_h\right\|_V$ for three $h$-versions $(p=0,1,2)$. Figure~\ref{fig:hessian3d_outside}
shows the error in $x=(1,\frac{1}{3},\frac{1}{3})$. Again, at least for $p \geq 2$ we observe good convergence of the Hessian approximation.
\end{example}
\begin{figure}
	\centering
\includegraphics[keepaspectratio,width=120mm]{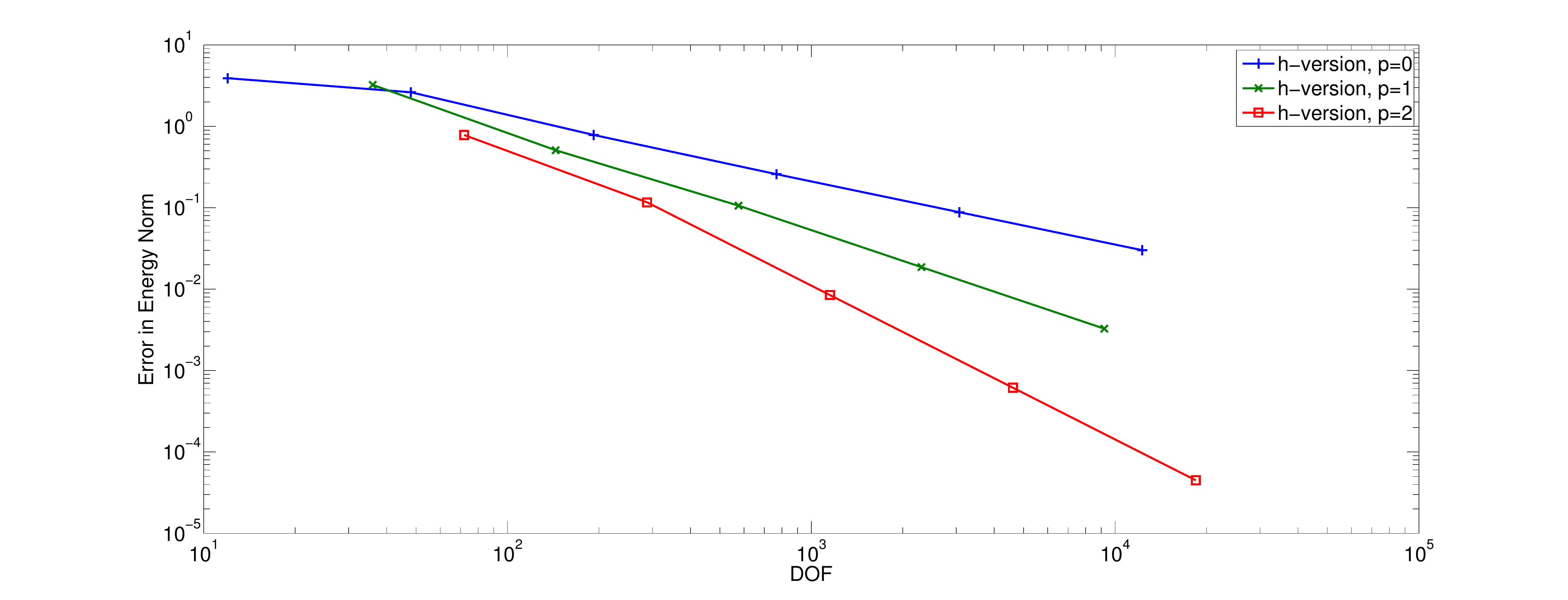}
	\caption{BEM-Error $\|\mu -\mu_h\|_V$  in the energy norm for the 3d case on the cube}
	\label{fig:errorbem3d}
\end{figure}

\begin{figure}[h!]
	\centering
		\includegraphics[width=120mm, keepaspectratio]{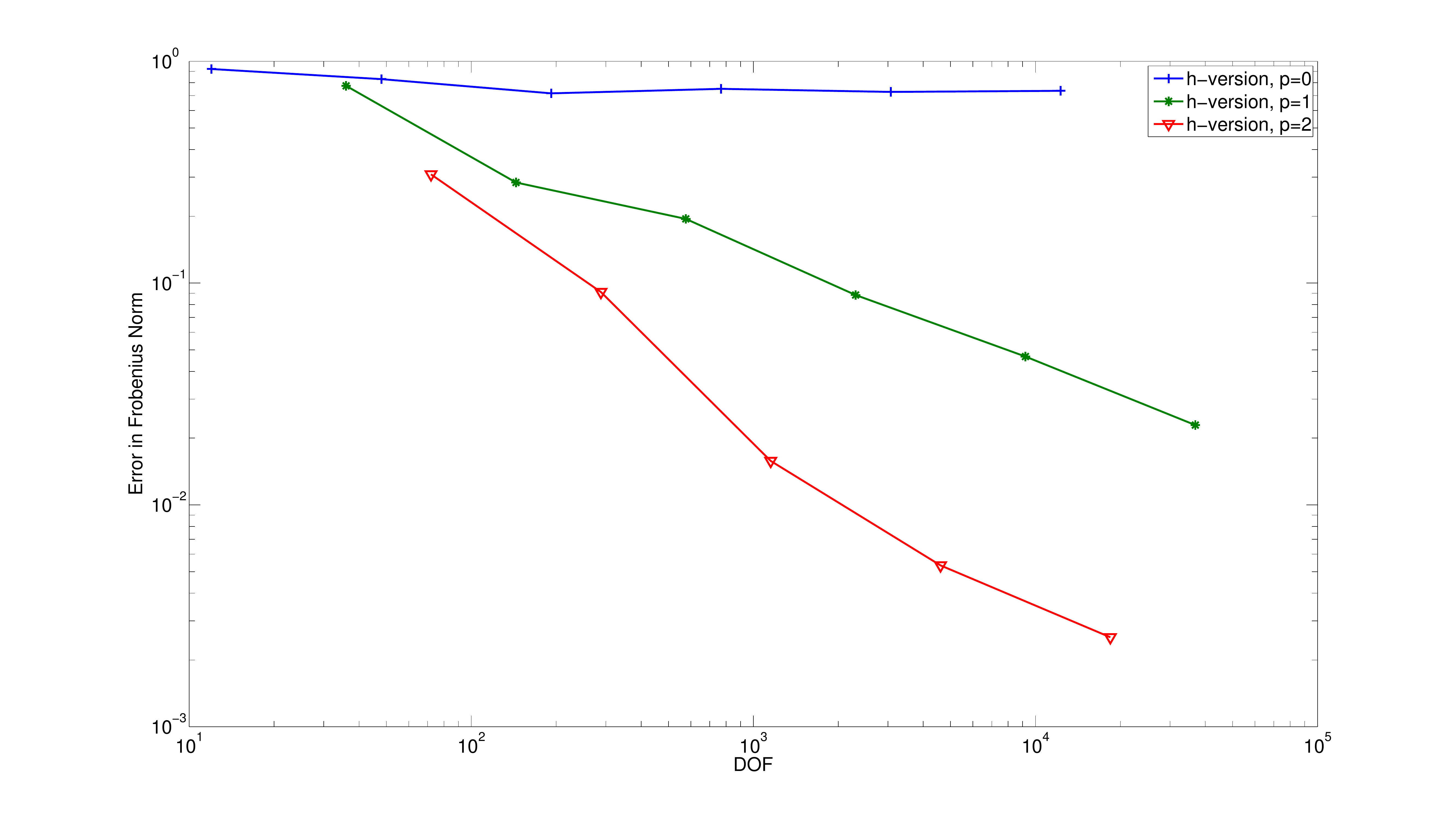}		
		\caption{Error of the Hessian approximation 3d case, $h$-versions for a point on $\Gamma$}
	\label{fig:hessian3d_outside}
\end{figure}

Similar qualitative results hold for the case of a sphere. However, in this case the error for the domain approximation quickly dominates the numerical error of differentiation \cite{costea2012}.\\

To control the error of the domain approximation, it is well known \cite{Ned00} that the finite elements to approximate the surface should be one order higher than the finite elements to approximate the solution of the integral equation with the single layer potential. As the accurate computation of the Hessian requires at least quadratic elements, third order elements should be used for $\varphi$. The corresponding software development will require further research.

\subsection{Molodensky problem}
In this section we perform a first study of the numerical solution of the fully nonlinear Molodensky problem. To assess the accuracy and convergence properties of the proposed algorithm, we study it in a simple, but algorithmically nontrivial model problem with an explicit exact solution.\\

The data $W, G$ are those of a spherical earth of radius $1.1$~with the Newton gravitational potential: For $x \in S^2$, the surface is described by $\varphi(x)=1.1 x$,
the gravitational potential is $W(x)=\frac{1}{1.1}$, and $G(x)=-\frac{x}{1.1^2}$. We choose the unit sphere $S^2$, $\varphi_0(x)=x$, as initial approximation. Therefore, $W_0(x)=1$, $G_0(x)=-x$ and $h_0=\frac{x}{2}$ ($x \in S^2$).

Since our model problem is rotationally invariant and the algorithm preserves this symmetry, we expect a sequence of computed surfaces which are slightly perturbed spheres converging to a sphere of radius 1.1.
The perturbation should be due to discretization and rounding errors. The mean $L^2$ error in $\varphi$ is thus equivalent to
\begin{equation*}
\|\varphi_m - \varphi\|_{L^2} \asymp \frac{1}{\# \ nodes}\big[ \sum_{i=1}^{ \#\ nodes} dist(\varphi_m(x_i), \varphi(S^2))^2]^{1/2} \ .
\end{equation*}

We have performed several numerical experiments with different parameters $\theta_0, \kappa$. Firstly, if the amount of data smoothing is too small, the algorithm is unstable as expected (and observed) in the case without smoothing operator.
Secondly, if the amount of data smoothing is too large, then essential information on the right hand side in the linearized Molodensky problem is lost in the first steps, and in combination with the numerical errors convergence is lost. Also if the amount of smoothing does not decay sufficiently
fast, the right hand side in the linearized Molodensky problem is close to machine precision, leading to
numerical artefacts. Figure \ref{fig:radiusl2smoother} presents some reasonable choices of parameters $\theta_0, \kappa$.

Figure \ref{fig:radiusl2smoother} shows that the propagation of the discretization error cannot be eliminated. However, increasing the amount of smoothing per iteration for a fixed mesh delays the point at which the propagated discretization error becomes dominating. Decreasing the mesh size, leads to a more even error reduction per iteration step. However, the error reduction per iteration step also decreases. The point at which discretisation errors become dominant seems difficult to predict.

Figure \ref{fig:pointwiserror} shows the average pointwise error $$|u_N(x) -u(x)|_{av} = \frac{1}{M}\left(\sum_{i=1}^{M} |u_N(x_i^{ext}) -u(x_i^{ext})|^2\right)^{1/2}$$ computed in a set of $M=10242$ exterior points for the linearized Molodensky problem with smoother ($\theta_0=2.6, \kappa=6$) for the
first three Nash-H\"ormander iteration steps. Here $u(x)$ is obtained by extrapolation. All three curves  show similar convergence rates for DOF $\to \infty$, see Table \ref{tab:errorspointwise}.

Figure \ref{fig:radiusrestart} displays the $L^2$ error in $\varphi$ versus the number of restarts for the restarted algorithm. The restart is done after each iteration step. We observe the same structural behaviour as for the other two experiments. Therefore, from the third restart of the algorithm onwards the  discretization error
propagation becomes dominating. However, refining the mesh, from $N=2$ to $N=3$, slightly reduces the error after the second and third restart, before increasing after the third restart.

Figure  \ref{fig:error_g_restart} displays the $L^2$ error in the gravity vector,
\begin{equation*}
\|G_m - G\|_{L^2} \asymp \frac{1}{\# \ nodes}\big[ \sum_{i=1}^{ \#\ nodes} (G_m(x_i)-G(x_i))^2]^{1/2} \ ,
\end{equation*}
in the algorithm with restart. The errors slowly decreases for the first five iteration steps, but from this step onwards the method provides uncontrollable surface updates (peaks and undesirable deformations occur) afterwards.

To sum up, we observe convergence of the algorithm in $\varphi$ until numerical errors due to discretization and domain approximation accumulate. A higher-order discretization of the surface seems to be necessary to obtain stable convergence and an improved approximation of $G$ after a larger number of steps. For the restarted algorithm, a better understanding of the optimal smoothing parameters must be achieved.
\begin{figure}[tbp]
	\centering
	\includegraphics[keepaspectratio,width=120mm]{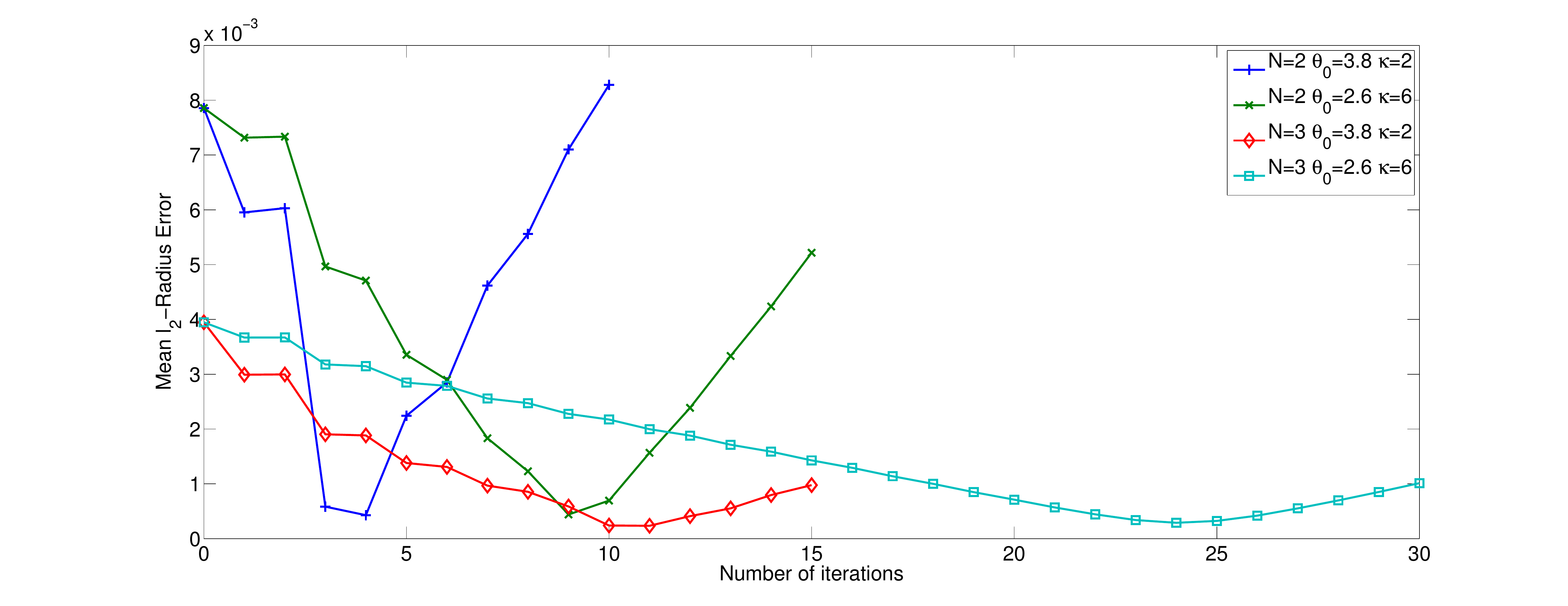}
	\caption{$\|\varphi_m - \varphi\|_{L^2}$ for the algorithm without restart}
	\label{fig:radiusl2smoother}
\end{figure}
\begin{figure}[tbp]
	\centering
	\includegraphics[keepaspectratio,width=120mm]{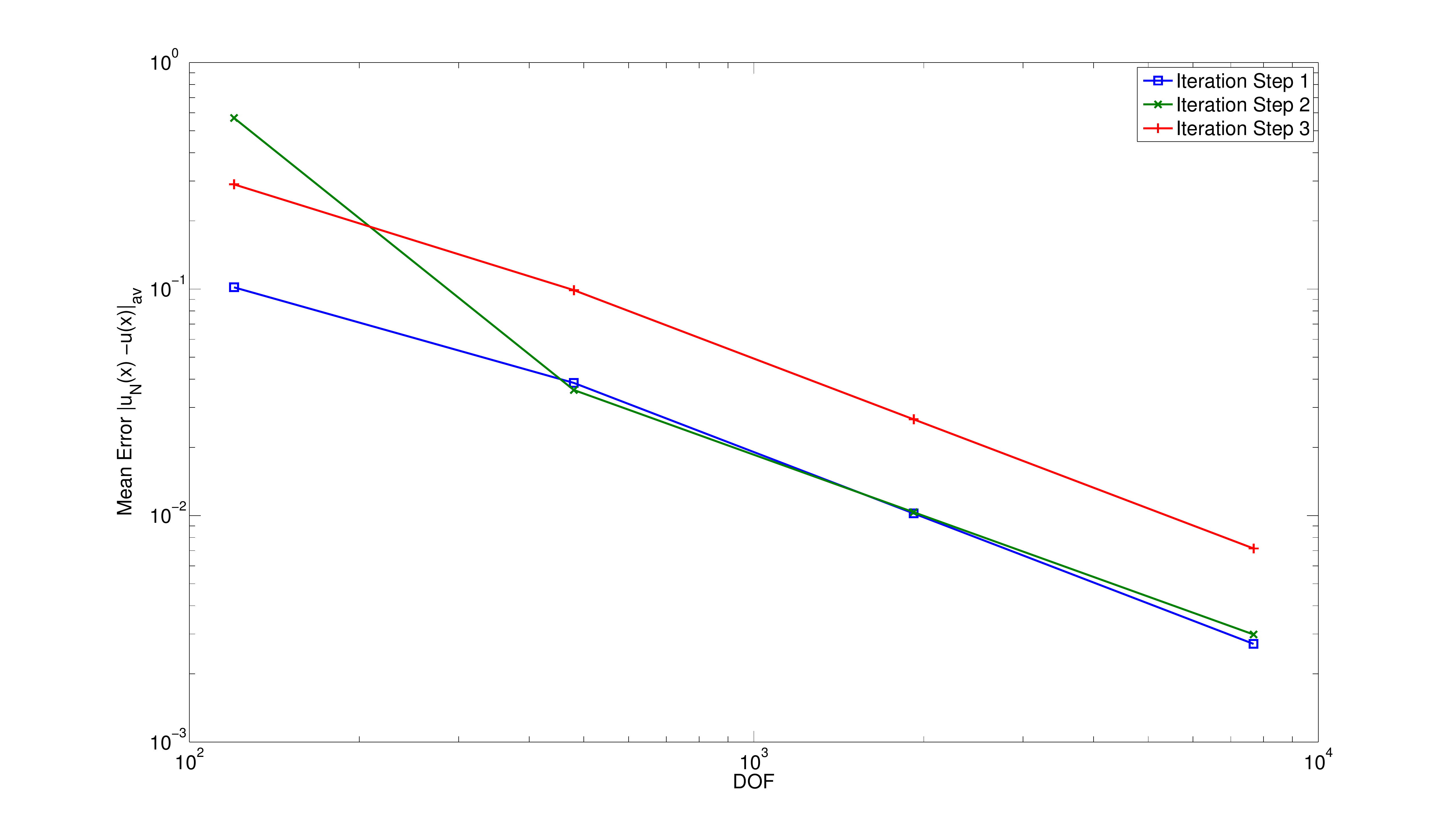}
	\caption{Pointwise error $|u_N(x) -u(x)|_{av}$ computed in a set of $10242$ exterior points for the linearized Molodensky problem}
	\label{fig:pointwiserror}
\end{figure}

\begin{figure}[h!]
	\centering
	\includegraphics[keepaspectratio,width=120mm]{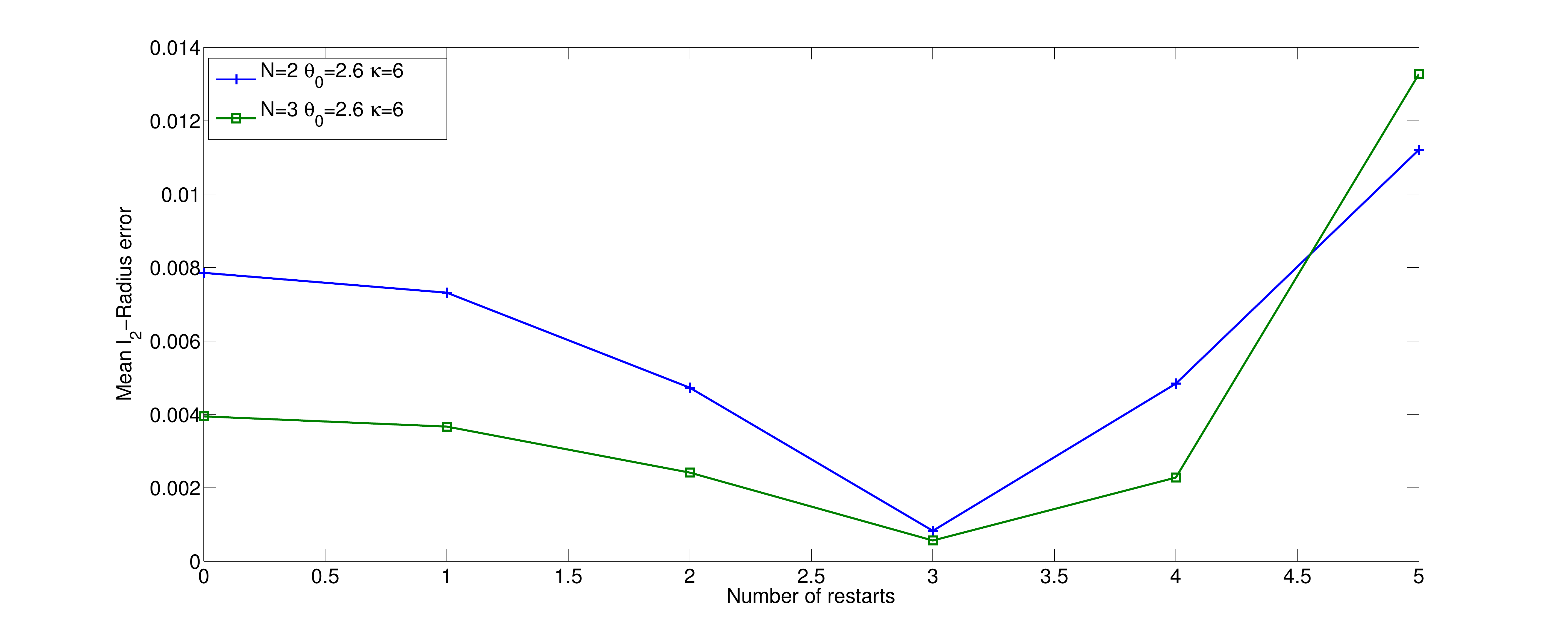}
	\caption{$\|\varphi_m - \varphi\|_{L^2}$ for the algorithm with restart}
	\label{fig:radiusrestart}
\end{figure}

\begin{figure}[h!]
	\centering
	\includegraphics[keepaspectratio,width=120mm]{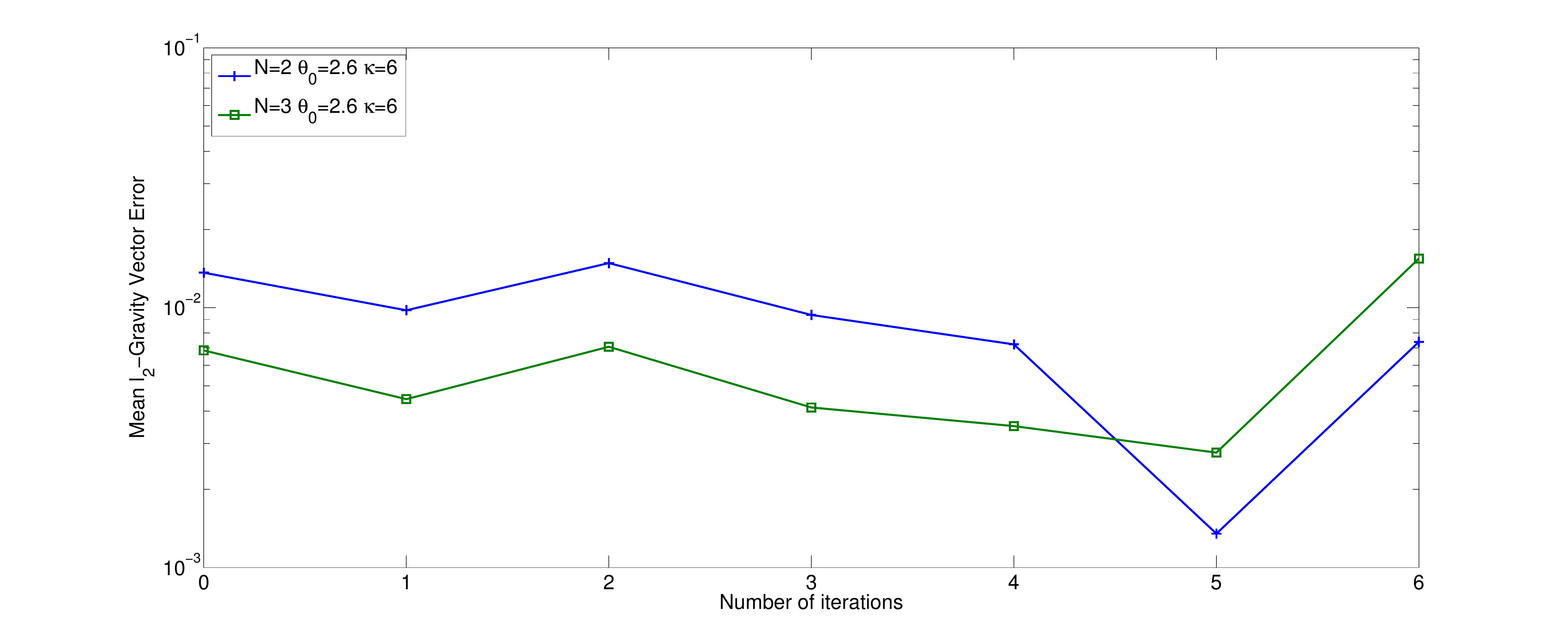}
	\caption{$\|G_m - G\|_{L^2}$ for the algorithm with restart}
	\label{fig:error_g_restart}
\end{figure}
\begin{table}[h!]
\begin{center}
\begin{tabular}{|r|r|l|l|l|}
\hline
Iter  & DOF & $|u_N(x)-u(x)|_{av}$ & EOC \\
\hline
0&  120    &\hspace{0.7cm}0.10170&  \\
  &  480    &\hspace{0.7cm}0.03850& 0.70  \\
   &1920    & \hspace{0.7cm}0.01022& 0.96 \\
   &7680    & \hspace{0.7cm}0.00271& 0.96 \\
  \hline
1&  120    &\hspace{0.7cm}0.56875	&  \\
   &  480    &\hspace{0.7cm}0.03582& 1.99  \\
   &1920    & \hspace{0.7cm}0.01034&  0.90\\
   &7680    & \hspace{0.7cm}0.00299& 0.90\\
\hline
2&  120    &\hspace{0.7cm}0.28961&  \\
 &  480    &\hspace{0.7cm}0.09885& 0.77 \\
  &1920    & \hspace{0.7cm}0.02660&  0.95\\
  &7680    & \hspace{0.7cm}0.00716& 0.95\\
\hline
\end{tabular}
\caption{Pointwise errors and experimental orders of convergence for the linearized Molodensky problem}
\label{tab:errorspointwise}
\end{center}
\end{table}

Figure \ref{fig:figuressm2}  displays the sequence of obtained surfaces. The marked point is always the north pole of the sphere, i.e.~$x=y=0$ and only $z$ varies.  Interestingly, for each experiment  the surface update is almost constant over the mesh points, leading to a sequence of almost spheres.

\begin{figure}[h!]
	\centering
	 \subfigure[320 triangles, $z=1.007$]{
		\includegraphics[width=69.5mm, keepaspectratio]{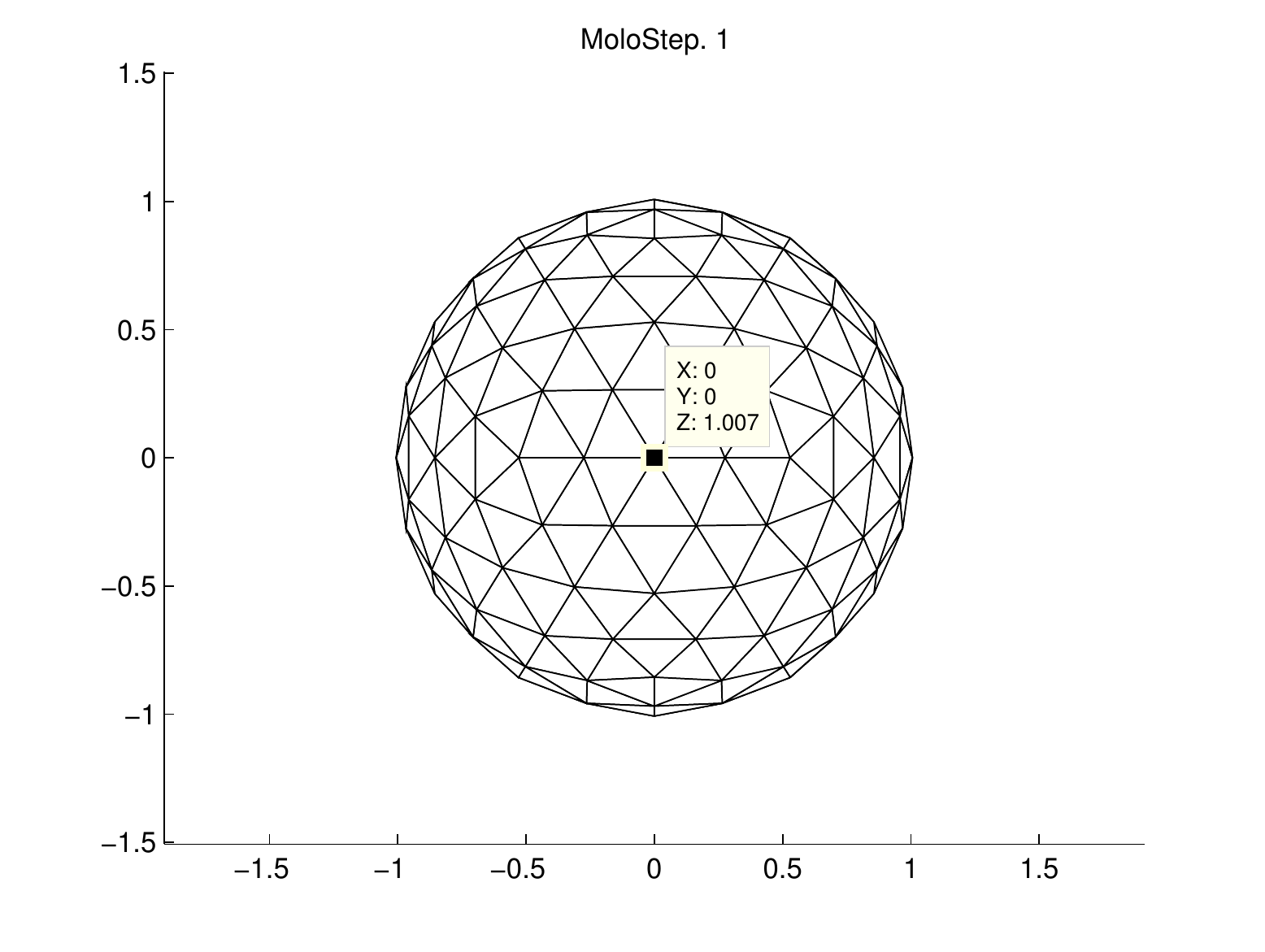}} \
	 \subfigure[1280 triangles, $z=1.007$]{
		\includegraphics[width=69.5mm, keepaspectratio]{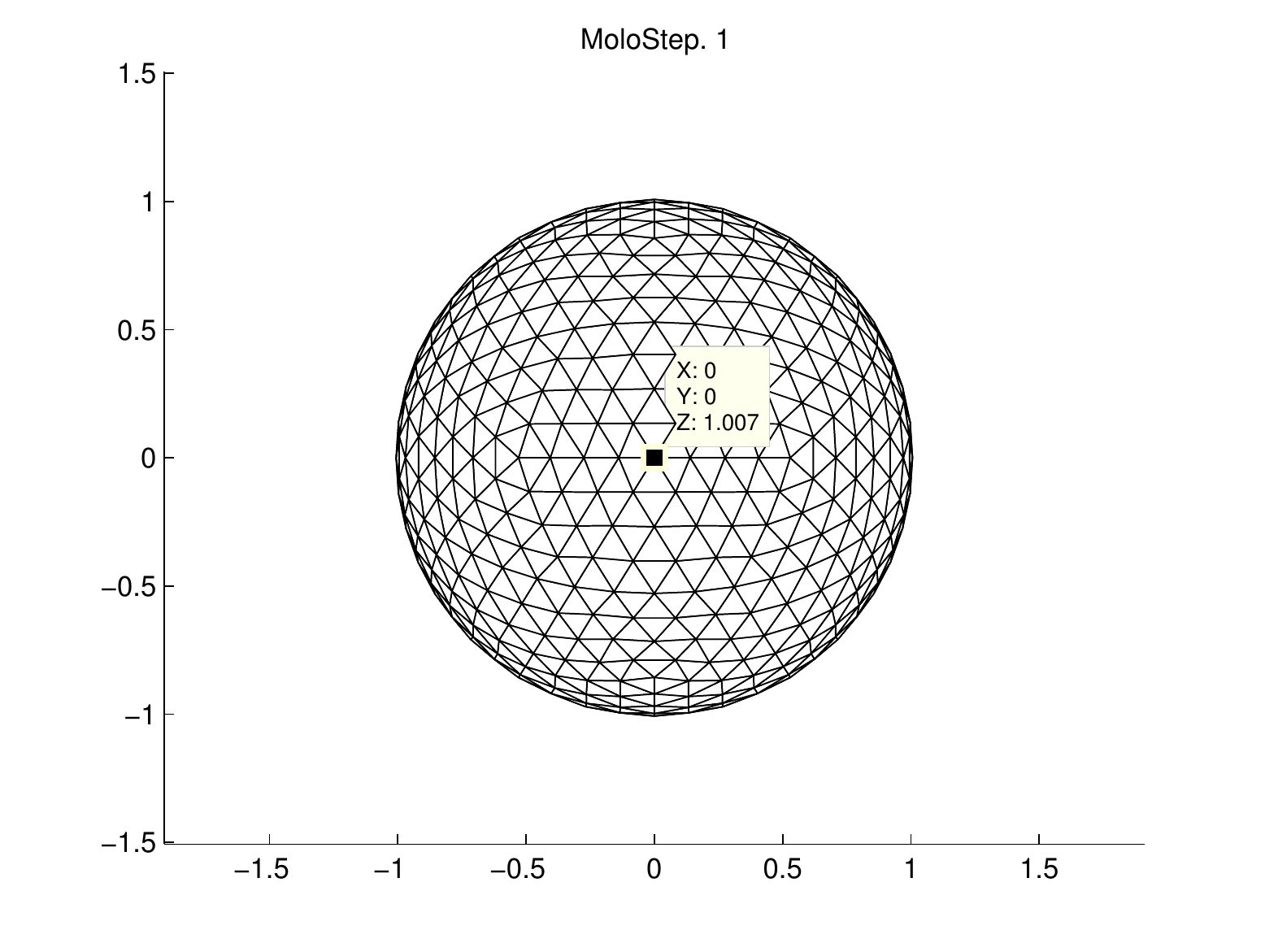}}\\
	 \subfigure[320 triangles, $z=1.057$]{
		\includegraphics[width=69.5mm, keepaspectratio]{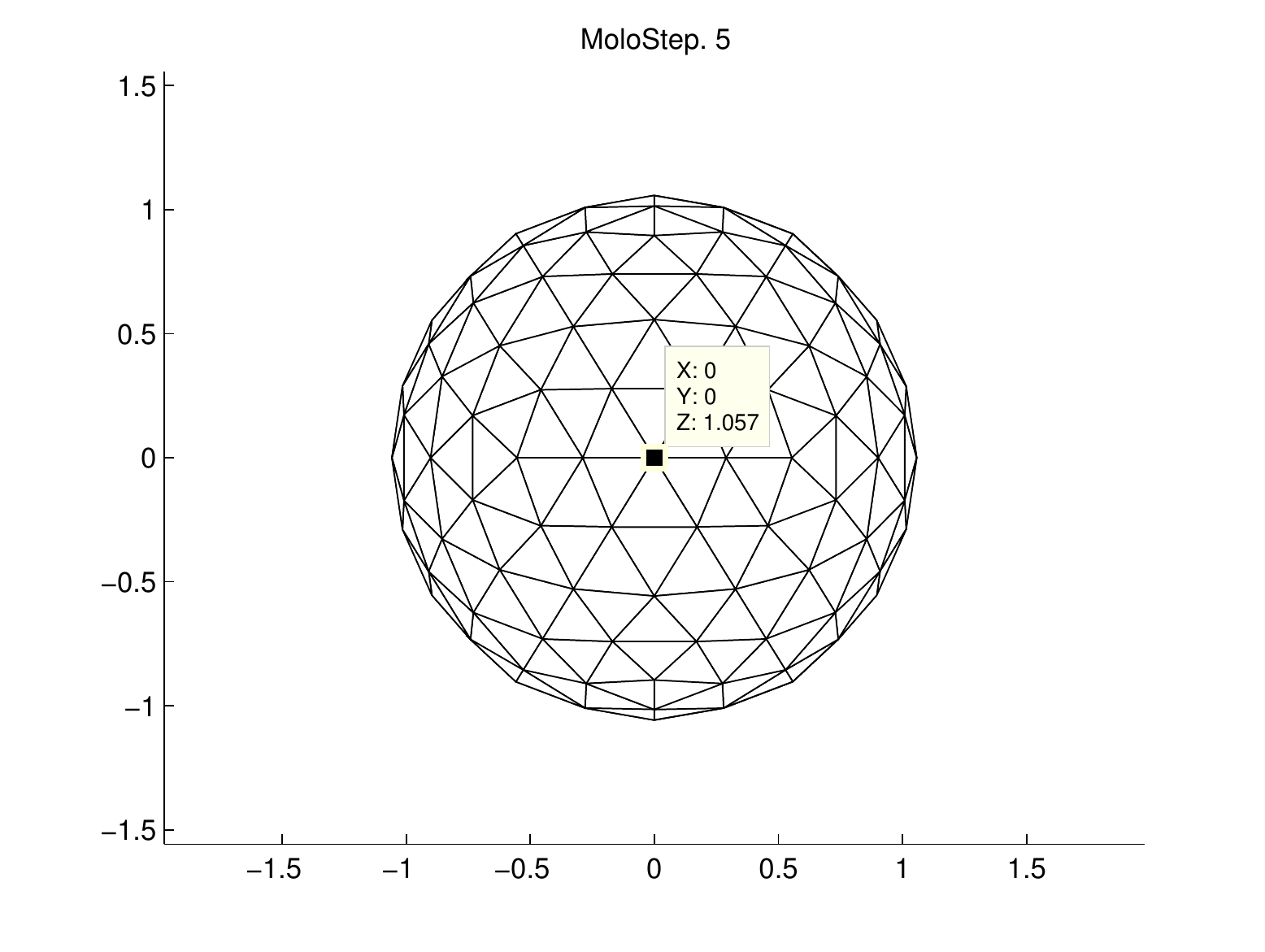}} \
	 \subfigure[1280 triangles, $z=1.045$]{
		\includegraphics[width=69.5mm, keepaspectratio]{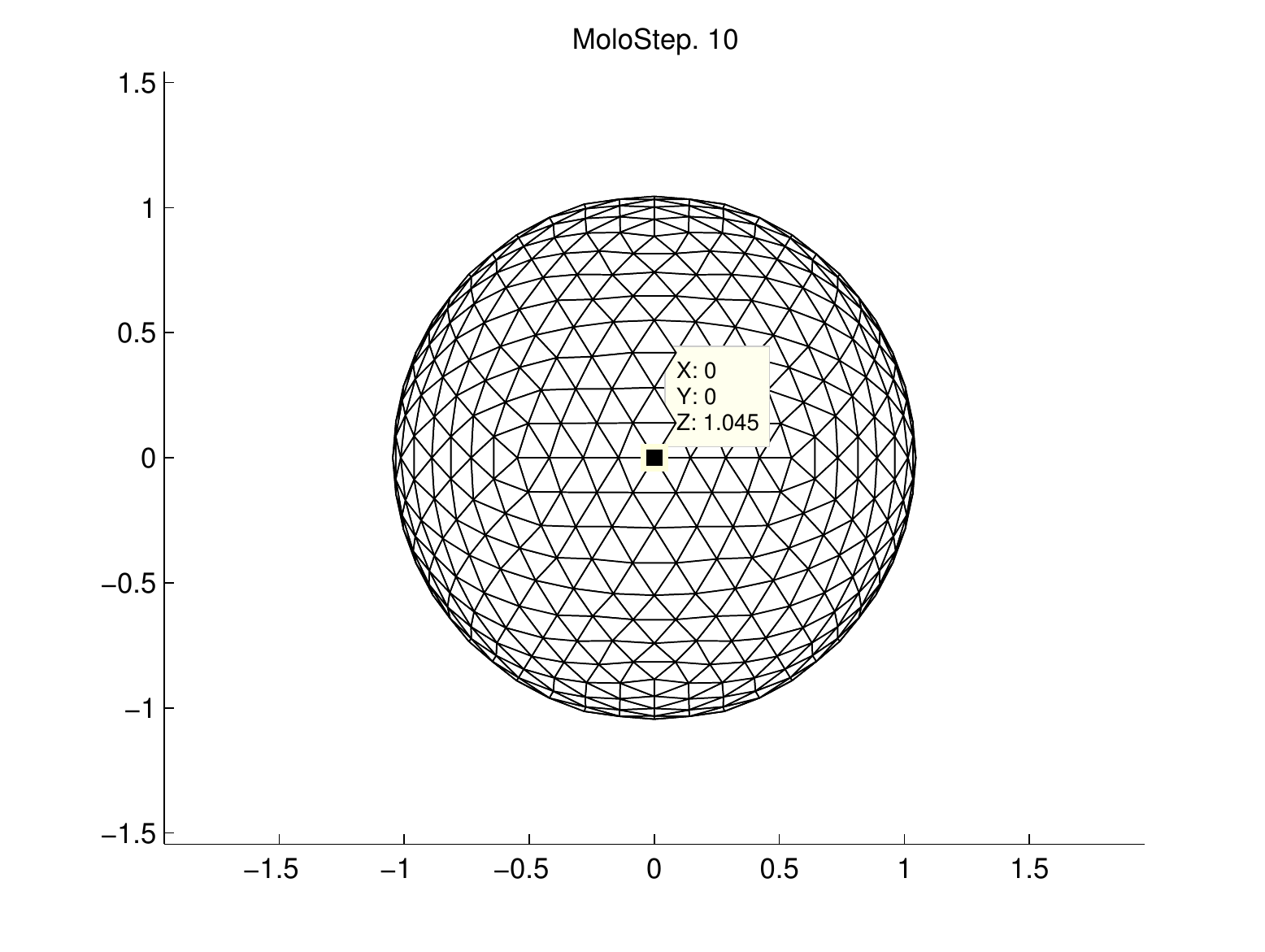}}\\
\subfigure[320 triangles, $z=1.107$]{
		\includegraphics[width=69.5mm, keepaspectratio]{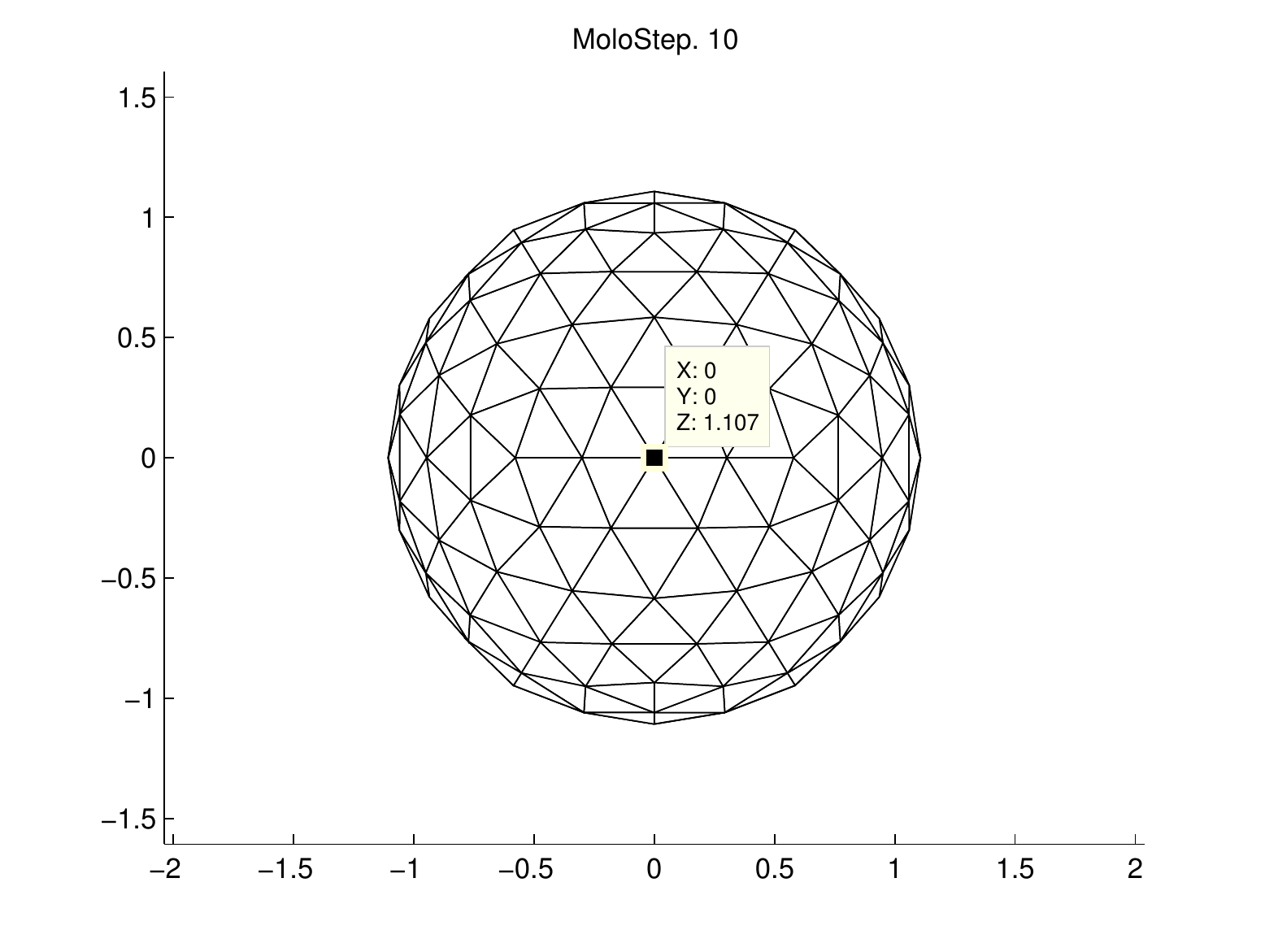}} \
	 \subfigure[1280 triangles, $z=1.104$]{
		\includegraphics[width=69.5mm, keepaspectratio]{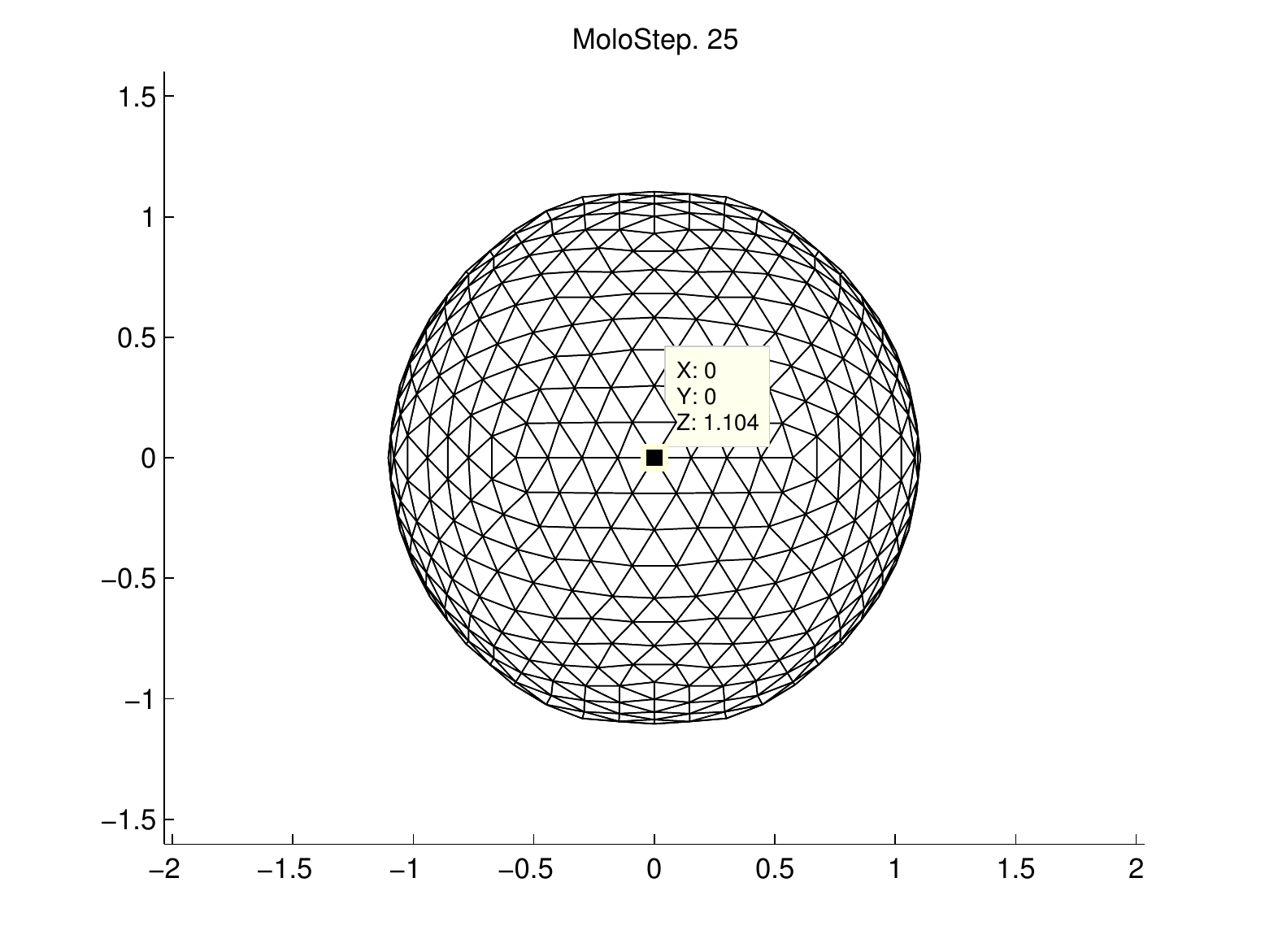}}		
		\caption{Icosahedron refinements for $\theta_0=2.6, \kappa=6$}
	\label{fig:figuressm2}
\end{figure}

\section{Appendix}
We are going to use the following definition of the H\"{o}lder--spaces, but will rarely use them for integer exponent $a$:
\begin{definition}[Definition A.3~\cite{Hoermander}]
Let $k \in \mathbb N_0, k<a \leq k+1$ and $B \subseteq \mathbb R^n$ compact, convex such that $\mathring{B} \neq \emptyset$.\\
Define
\begin{align*}
\mathscr H^{a}(B):= \lbrace u \in C^k(B): &\|u\|_0=\sup \limits_{x \in B} |u(x)|<\infty \quad \mbox{and} \\
& |u|_a:=\sum_{|\alpha|=k} \sup \limits_{x \neq y \in B} \frac{|\partial^\alpha u(x)-\partial^{\alpha}u(y)|}{|x-y|^{a-k}}<\infty \rbrace.
\end{align*}
We also set $\mathscr H^0(B):=C(B)$. Then $\mathscr H^a := \mathscr H^a(B)$ with the norm $\|\cdot\|_a:=\|\cdot\|_0+|\cdot|_a$ is a Banach space.
\end{definition}

For a compact manifold, one defines $\mathscr H^a$ by covering it with a finite number of neighborhoods homeomorphic to subsets of $\mathbb{R}^n$.\\

Basic interpolation estimates will be used frequently:
\begin{equation}\label{interpol}
\|v\|_{\sigma a + (1-\sigma)b} \leq C \|v\|_a^{\sigma} \|v\|_{b}^{1-\sigma} \qquad (\sigma \in (0,1), \ v \in \mathscr H^{\max\{a, b\}}) \ .
\end{equation}
Similarly, if $(a,b) \in \mathbb{R}_+^2$ belongs to the convex hull of $(a_1, b_1), \dots, (a_J,b_J)\in \mathbb{R}_+^2$, then
\begin{equation}\label{interpol2}
\|v\|_{a}\|w\|_{b} \leq C \sum_{j=1}^{J}\|v\|_{a_j} \|w\|_{b_j} \qquad (\ v \in \mathscr H^{\max\{a_j\}}, \ w \in \mathscr H^{\max\{b_j\}}) \ .
\end{equation}

\subsection{Proof of Theorem \ref{apriorithm}}\label{app:proofs}

For ease of presentation, we set $u = (u^{(1)},u^{(2)}):=(W, \varphi) : S^2 \to \mathbb{R} \times \mathbb{R}^3$, $f := (G,W) : S^2 \to  \mathbb{R}^3 \times \mathbb{R}$. The map from $(W,\varphi)$ to the corresponding $G$ is denoted by $\Gamma$. The Molodensky problem assumes the form $\Phi(u) := (\Gamma(u), u^{(1)}) = f$.\\

In this notation, the Nash--H\"{o}rmander iteration reads as
$$u_{m+1} = u_m + \Delta_m \dot{u}_m, \quad \dot{u}_m = \Psi(v_m) g_m, \quad v_m = S_{\theta_m} u_m \ ,$$
$\Psi$ being the inverse operator to the linearization of $\Phi$ around $v_m$.
To show that the Algorithm \ref{alg:nashhormsm} is a reformulation of the one in \cite{Hoermander}, we show that the equations (\ref{wreccurencetosmooth}) and (\ref{greccurencetosmooth}) are equivalent to usual definition
\begin{equation}\label{gdefinition}
\Delta_0 g_0 = S_{\theta_0} f, \quad g_m = \Delta_m^{-1} ((S_{\theta_m}-S_{\theta_{m-1}})(f-E_{m-1})-\Delta_{m-1}S_{\theta_m}e_{m-1})\quad (m>0) \ .
\end{equation}
Here, the errors $e_m= e_m' + e_m''$ are defined as the sum of a smoothing error
\begin{equation}\label{epdef}
e_m' = (\Phi'(u_m)-\Phi'(v_m))\dot{u}_m
\end{equation}
and a linearization error
\begin{equation}\label{eppdef}
e_m'' = \Delta_m^{-1}(\Phi(u_{m}+\Delta_m \dot{u}_m)-\Phi(u_m)-\Delta_m\Phi'(u_m)\dot{u}_m)\ .
\end{equation}
The total error is $E_m = e_0 + \cdots + e_{m-1}$ ($e_0 = E_0 = 0$). \\

In our case, $\Phi'(u)\dot{u} = (\Gamma'(u)\dot{u}, \dot{u}^{(1)})$, so that with $\dot{G}_m = \Gamma'(W_m, \varphi_m)(\dot{W}_m, \dot{\varphi}_m)$
\begin{eqnarray*}
e_m' &=& \Phi'(W_m, \varphi_m)(\dot{W}_m, \dot{\varphi}_m)-\Phi'(\widetilde{W}_m, \widetilde{\varphi}_m)(\dot{W}_m, \dot{\varphi}_m)\\
& =& (\Gamma'(W_m, \varphi_m)(\dot{W}_m, \dot{\varphi}_m), \dot{W}_m) - (\Gamma'(\widetilde{W}_m, \widetilde{\varphi}_m)(\dot{W}_m, \dot{\varphi}_m), \dot{W}_m)\\
& =& (\dot{G}_m - g_m^{(1)},0) \ .
\end{eqnarray*}
Similarly, for the linearization error we have
$$\Delta_m e_m'' = \Phi(W_m+\Delta_m\dot{W}_m, \varphi_m+ \Delta_m\dot{\varphi}_m)-\Phi(W_m, \varphi_m)-\Delta_m\Phi'(W_m, \varphi_m)(\dot{W}_m, \dot{\varphi}_m) \ ,$$
with second component $W_m+\Delta_m\dot{W}_m - W_m - \Delta_m\dot{W}_m = 0$. The first component, by definition, is $G_{m+1}-G_m-\Delta_m \dot{G}_m$.

As a consequence, the second components of the errors $e_m$ and $E_m$ vanish. Equation (\ref{gdefinition}) yields
\begin{equation}\label{g2alg}\Delta_0 g_0^{(2)} = S_{\theta_0}(W-W_0)\ ,\quad \Delta_m g_m^{(2)} = S_{\theta_m}(W-W_0)-S_{\theta_{m-1}} (W-W_0) \quad (m>0) .\end{equation}
Concerning the first component of $g_m$, we use $\Delta_0 g_0^{(1)} =  S_{\theta_0}(G-G_0)$ and our above computation:
\begin{eqnarray*}
\Delta_1 g_1^{(1)} &=& (S_{\theta_1}-S_{\theta_{0}})(G-G_0)-\Delta_0 S_{\theta_1}(\dot{G}_0 - g_0^{(1)})-S_{\theta_1}(G_1-G_0-\Delta_0\dot{G}_0) \\
&=& S_{\theta_1}(G-G_1+\Delta_0g_0^{(1)})-S_{\theta_0}(G-G_0) \ .
\end{eqnarray*}
Similarly we obtain a recursion formula for $m>1$:
\begin{equation}\label{g1alg}\Delta_m g_m^{(1)} = S_{\theta_m}\left(G-G_m+\sum_{j=0}^{m-1}\Delta_jg_j^{(1)}\right)-S_{\theta_{m-1}}\left(G-G_{m-1}+\sum_{j=0}^{m-2}\Delta_jg_j^{(1)}\right) \ .\end{equation}
Equations (\ref{g2alg}) and (\ref{g1alg}) show the equivalence of Algorithm \ref{alg:nashhormsm} to the formulation in \cite{Hoermander}.

\begin{proof}[of Theorem \ref{apriorithm}]By the above reduction, it is sufficient to analyze H\"ormander's iterative method and derive an a priori estimate for the error after $m$ steps. Our proof relies on certain estimates in H\"{o}rmander's qualitative analysis. Recall that the smoothing operator is generated by $(-1)^k \Delta^k$.\\

The proof is given in several steps. We are going to rely on a number of auxiliary results.

Given a sufficently large, fixed $a_\Phi$, we recall the following continuity estimates for $\Phi''$ and the inverse of the linearization $\Psi$ from (\cite{Hoermander}, equations (2.1.5/6)):\\

1.) For all $\epsilon>0$, $0\leq a \leq a_\Phi$ and $u, v,w\in C^\infty$ with $\|u\|_{2+\epsilon}<C$  we have:
\begin{equation}  \label{Phi215}
\|\Phi''(u;v,w)\|_{a+2\epsilon} \lesssim \|v\|_{a+2+3\epsilon}\|w\|_{0}+\|v\|_{0}\|w\|_{a+2+3\epsilon} + \|v\|_0\|w\|_0\|u\|_{a+3+2\epsilon}
\end{equation}

2.) For all $\epsilon>0$, $0\leq a \leq a_\Phi$ and $v,g\in C^\infty$ with $\|v\|_{2+\epsilon}<C$
\begin{equation}  \label{psi216}
\|\Psi(v)g\|_{a+\epsilon} \lesssim \|g\|_{a+\epsilon} + \|g\|_{\epsilon} \|v\|_{a+2+\epsilon}
\end{equation}

The first lemma translates a bound for the first $m$ increments $\dot {u}_j$ into properties of $U_m= \sum_{j=0}^m \triangle_j \dot u_j$.

\begin{lemma}\label{analmol:lemma1}
Let $\epsilon >0,\, \alpha + \epsilon \notin \mathbb N, \,-\epsilon \leq \alpha_{-} \leq \alpha \leq \alpha_{+}$. Assume that $2k > \alpha+\epsilon - a$ and that for some $\delta>0$, $m \geq 0$
\begin{equation}\label{analmol:dotuj}
 \|\dot {u}_j \|_{a+\epsilon} \leq \delta \theta_j^{a-\alpha-1} \qquad  \forall 0\leq j \leq m \ \forall a \in [\alpha_-,\alpha_{+}]\ .
\end{equation}
Then $U_m= \sum_{j=0}^m \triangle_j \dot u_j \in \mathscr {H}^{\alpha+\epsilon}$ satisfies
\begin{align}
\|U_m\|_a &\leq C_1 \delta \qquad \forall a \leq \alpha+\epsilon \ ,\\
 \|U_m-S_{\theta_{m+1}}U_m\|_a &\leq C_2 \delta \theta_{m+1}^{a-\alpha-\epsilon} \qquad \forall\: 0\leq a\leq \alpha_{+} +\epsilon \ ,\\
\|S_{\theta_{m+1}}U_m\|_a &\leq {C}_3 \delta \theta_{m+1}^{(a-\alpha-\epsilon)_{+}} \qquad \forall\: 0 \leq a \leq a_0
\end{align}
for fixed $a_0$.
\end{lemma}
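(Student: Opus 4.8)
The proof rests on an elementary summation estimate for the step sizes $\Delta_j=\theta_{j+1}-\theta_j$ and on the smoothing estimates of Theorem~\ref{heatthem}. Since $t\mapsto(\theta_0^\kappa+t)^{1/\kappa}$ is concave and increasing, with value $\theta_j$ at $t=j$, one has $\Delta_j\asymp\theta_j^{1-\kappa}$ and, comparing the sum with the corresponding integral,
$$\sum_{j=0}^m\Delta_j\,\theta_j^s\ \asymp\ \int_{\theta_0}^{\theta_{m+1}}u^s\,du\ \lesssim\ \theta_{m+1}^{\,s+1}\ (s>-1),\qquad \sum_{j=0}^m\Delta_j\,\theta_j^s\ \lesssim\ 1\ (s<-1),$$
uniformly in $m$; the borderline case $s=-1$, where the sum grows like $\log\theta_{m+1}$, will always be avoided by perturbing exponents by a small $\eta>0$, using the strict inequalities $\alpha_-<\alpha<\alpha_+$ and the fact that $\alpha+\epsilon\notin\mathbb N$ (so all intermediate H\"older indices can be kept non-integral). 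Since $U_m$ is a finite sum of the smooth $\dot u_j$, it lies in $\mathscr H^{\alpha+\epsilon}$, and only the uniform bounds are at issue. Throughout, the hypothesis $2k>\alpha+\epsilon-a$ is what keeps the source-to-target H\"older gap below $2k$ whenever the restricted heat-kernel bound $(iii')$ of Theorem~\ref{heatthem} is used.

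For the first estimate I would write $\|U_m\|_a\le\sum_j\Delta_j\|\dot u_j\|_a$. For $a<\alpha+\epsilon$, the hypothesis on $\dot u_j$ (used directly if $a-\epsilon\in[\alpha_-,\alpha_+]$, and via $\|\cdot\|_a\lesssim\|\cdot\|_{\alpha_-+\epsilon}$ otherwise) bounds $\|\dot u_j\|_a$ by $\delta$ times a power $\theta_j^s$ with $s=\max\{a-\epsilon,\alpha_-\}-\alpha-1<-1$, so the summation estimate gives $\|U_m\|_a\lesssim\delta$. The borderline value $a=\alpha+\epsilon$ I would postpone and recover at the very end from $\|U_m\|_{\alpha+\epsilon}\le\|U_m-S_{\theta_{m+1}}U_m\|_{\alpha+\epsilon}+\|S_{\theta_{m+1}}U_m\|_{\alpha+\epsilon}$ together with the other two estimates at $a=\alpha+\epsilon$, both of which carry $\theta_{m+1}$-exponent $0$ there.

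For the third estimate, $\|S_{\theta_{m+1}}U_m\|_a\le\sum_j\Delta_j\|S_{\theta_{m+1}}\dot u_j\|_a$. When $a\le\alpha+\epsilon$ I would estimate each term by property $(ii)$ (or $(i)$ for very small $a$) with a source index in $[\alpha_-+\epsilon,\alpha+\epsilon)$, so that $\theta_{m+1}$ enters with a non-positive power while the summand carries a weight $\theta_j^s$ with $s<-1$; this yields $\|S_{\theta_{m+1}}U_m\|_a\lesssim\delta$. When $\alpha+\epsilon<a\le\alpha_++\epsilon$ I would use boundedness of $S_{\theta_{m+1}}$ on $\mathscr H^a$ together with $\|\dot u_j\|_a\lesssim\delta\theta_j^{a-\epsilon-\alpha-1}$, whose exponent is $>-1$, to obtain $\lesssim\delta\,\theta_{m+1}^{a-\alpha-\epsilon}$. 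When $\alpha_++\epsilon<a\le a_0$ I would apply $(ii)$ from the top source index $\alpha_++\epsilon$, which gives $\theta_{m+1}^{a-\alpha_+-\epsilon}\sum_j\Delta_j\theta_j^{\alpha_+-\alpha-1}\lesssim\theta_{m+1}^{a-\alpha_+-\epsilon}\theta_{m+1}^{\alpha_+-\alpha}=\theta_{m+1}^{a-\alpha-\epsilon}$ (here $\alpha_+>\alpha$ makes the weight summable with the right power). In every regime the exponent collapses to $(a-\alpha-\epsilon)_+$, as claimed.

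For the second estimate I would write $U_m-S_{\theta_{m+1}}U_m=\sum_j\Delta_j(I-S_{\theta_{m+1}})\dot u_j$. For $a\le\alpha+\epsilon$ I would apply $(iii')$ with target $a$ and source index $\sigma=\alpha+\epsilon+\eta\le\alpha_++\epsilon$, legitimate for small $\eta$ since $0<\sigma-a<2k$ by the $2k$-hypothesis: this gives $\|(I-S_{\theta_{m+1}})\dot u_j\|_a\lesssim\theta_{m+1}^{a-\sigma}\,\delta\,\theta_j^{\sigma-\epsilon-\alpha-1}$, and since $\sigma>\alpha+\epsilon$ the summation estimate turns $\sum_j\Delta_j\theta_j^{\sigma-\epsilon-\alpha-1}$ into $\theta_{m+1}^{\sigma-\epsilon-\alpha}$, so the $\theta_{m+1}$-powers combine to $\theta_{m+1}^{a-\alpha-\epsilon}$. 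For $\alpha+\epsilon<a\le\alpha_++\epsilon$ I would instead bound $\|U_m-S_{\theta_{m+1}}U_m\|_a\le\|U_m\|_a+\|S_{\theta_{m+1}}U_m\|_a$, with $\|U_m\|_a\le\sum_j\Delta_j\|\dot u_j\|_a\lesssim\delta\,\theta_{m+1}^{a-\alpha-\epsilon}$ and the second term from the third estimate. Finally, I would evaluate the now-established second and third estimates at $a=\alpha+\epsilon$ to close the first estimate. I expect the main obstacle to be organisational rather than conceptual: aligning the chains of exponents so that the powers of $\theta_{m+1}$ cancel to exactly $a-\alpha-\epsilon$ (resp. to $0$), and checking at every invocation of the smoothing operator that the source-to-target H\"older gap has the correct sign for $(i)$/$(ii)$ and stays strictly below $2k$ for $(iii')$ — it is precisely the strict spacing $\alpha_-<\alpha<\alpha_+$ and the bound $2k>\alpha+\epsilon-a$ that make this bookkeeping go through, while replacing the classical estimate $(iii)$ by its heat-kernel version $(iii')$ costs nothing here.
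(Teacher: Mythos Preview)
The paper gives no proof of its own here; it simply defers to \cite[Lemma 2.2.1]{Hoermander}. Your argument---the Riemann-sum estimate $\sum_j\Delta_j\theta_j^{s}\asymp\int\theta^{s}\,d\theta$ combined with the smoothing properties (i), (ii), (iii$'$) of $S_\theta$---is exactly the standard Nash--Moser summation argument from that reference, adapted to the restricted heat-kernel estimate (iii$'$), and is correct. One small point worth tidying: when you pick the source index $\sigma=\alpha+\epsilon+\eta\le\alpha_++\epsilon$ you are tacitly assuming $\alpha<\alpha_+$ strictly, whereas the lemma as stated only has $\alpha\le\alpha_+$; this is harmless in every application in the paper but should be acknowledged.
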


We refer to \cite[Lemma 2.2.1]{Hoermander}) for the proof.\\

The lemma implies that under the given assumptions all iterates $u_m$ will remain in a neighborhood of $u_0$. We may therefore localize and appeal to estimates valid near $u_0$ and justify the linearization of the problem. A quantitative formulation of the localization is as follows:
\begin{cor}
Let $\widetilde \epsilon>0$, $\mu \leq \alpha + \epsilon$, and $a$, $\delta$ as above. Define
\begin{align*}
 V_s&:= \lbrace u \in C^{\infty}: \|u-u_0\|_\mu \leq s \rbrace \ ,\\
 \widetilde{V}_s&:= \lbrace u \in C^{\infty}: \|u\|_\mu \leq s \rbrace\ .
\end{align*}
Then there exist $C, C'>0$ (which depend only on $C_1$ and the constants in Theorem \ref{heatthem}(i), (iii')) such that for all $\theta \geq C\left(\frac{\widetilde \epsilon}{\|u_0\|_a}\right)^{\frac{1}{a-\mu}}$ and all $\delta < C' \tilde \epsilon$:\\
a) $S_\theta u_0 \in V_{\widetilde \epsilon/2}$\\
b) If $a=\mu \leq \alpha+\epsilon$, then $U_k,S_\theta U_k \in \widetilde{V}_{\widetilde \epsilon/2}$, $u_{k+1}=u_0 +U_k \in V_{\widetilde \epsilon}$ and $S_{\theta_{k+1}} u_{k+1} \in V_{\widetilde \epsilon}$.
\end{cor}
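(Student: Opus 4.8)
The plan is to obtain the Corollary as a purely quantitative bookkeeping consequence of Lemma \ref{analmol:lemma1} and Theorem \ref{heatthem}; the real content is an explicit choice of the constants $C$ and $C'$ that makes all the inclusions hold simultaneously. For part a) I would estimate the smoothing error directly. By Theorem \ref{heatthem}(iii'), applied with the index pair $(\mu,a)$ (which lies in the admissible range $0\le a-\mu<2k$ for the indices relevant here),
\[
\|S_\theta u_0-u_0\|_\mu \;\le\; C_{\mathrm{iii}'}\,\theta^{\mu-a}\,\|u_0\|_a \ .
\]
Requiring the right-hand side to be at most $\widetilde\epsilon/2$ is equivalent to $\theta^{a-\mu}\ge 2C_{\mathrm{iii}'}\|u_0\|_a/\widetilde\epsilon$, i.e.\ to $\theta$ exceeding the lower bound in the statement once $C$ is fixed large enough in terms of $C_{\mathrm{iii}'}$. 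This gives $\|S_\theta u_0-u_0\|_\mu\le\widetilde\epsilon/2$, hence $S_\theta u_0\in V_{\widetilde\epsilon/2}$.

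For part b) the standing hypotheses are those of Lemma \ref{analmol:lemma1}, in particular the bound (\ref{analmol:dotuj}) on the increments $\dot u_j$ for $0\le j\le k$. Lemma \ref{analmol:lemma1} then yields $\|U_k\|_a\le C_1\delta$ for every $a\le\alpha+\epsilon$; specializing to $a=\mu$ gives $\|U_k\|_\mu\le C_1\delta$. Applying Theorem \ref{heatthem}(i) with $b=a=\mu$ gives $\|S_\theta U_k\|_\mu\le C_{\mathrm{i}}\|U_k\|_\mu\le C_{\mathrm{i}}C_1\delta$. Hence, choosing $C':=(2C_{\mathrm{i}}C_1)^{-1}$ (taking $C_{\mathrm{i}}\ge 1$ without loss of generality), the smallness condition $\delta<C'\widetilde\epsilon$ forces both $\|U_k\|_\mu\le\widetilde\epsilon/2$ and $\|S_\theta U_k\|_\mu\le\widetilde\epsilon/2$, i.e.\ $U_k,S_\theta U_k\in\widetilde V_{\widetilde\epsilon/2}$. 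Since $u_{k+1}-u_0=U_k$, this also gives $\|u_{k+1}-u_0\|_\mu=\|U_k\|_\mu\le\widetilde\epsilon/2<\widetilde\epsilon$, so $u_{k+1}\in V_{\widetilde\epsilon}$.

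It remains to control $S_{\theta_{k+1}}u_{k+1}$, and here I would split
\[
S_{\theta_{k+1}}u_{k+1}-u_0 \;=\; \bigl(S_{\theta_{k+1}}u_0-u_0\bigr)\;+\;S_{\theta_{k+1}}U_k \ .
\]
The second term is bounded in $\mathscr H^\mu$ by $\widetilde\epsilon/2$ by the previous step applied with $\theta=\theta_{k+1}$. For the first term, observe that $\theta_{k+1}=(\theta_0^\kappa+k+1)^{1/\kappa}\ge\theta_0$, and $\theta_0$ is chosen above the threshold of part a); hence part a) applies with $\theta=\theta_{k+1}$ and bounds this term in $\mathscr H^\mu$ by $\widetilde\epsilon/2$ as well. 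Adding the two contributions gives $\|S_{\theta_{k+1}}u_{k+1}-u_0\|_\mu\le\widetilde\epsilon$, i.e.\ $S_{\theta_{k+1}}u_{k+1}\in V_{\widetilde\epsilon}$.

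The argument is essentially routine, since all the analytic estimates it uses are already established. The only points requiring care — and what I would regard as the sole (mild) obstacle — are ensuring that the index gap $a-\mu$ stays strictly below the smoothing order $2k$ so that Theorem \ref{heatthem}(iii') is legitimately applicable, and fixing $C'$ so that the single condition $\delta<C'\widetilde\epsilon$ simultaneously absorbs the constant $C_1$ from Lemma \ref{analmol:lemma1} and the operator-norm constant in Theorem \ref{heatthem}(i). No deeper difficulty arises.
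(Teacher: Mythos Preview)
The paper states this Corollary without proof, so there is no argument to compare against; your approach is the natural one and uses exactly the ingredients the Corollary flags (the constant $C_1$ from Lemma~\ref{analmol:lemma1} and the constants in Theorem~\ref{heatthem}(i),(iii$'$)). Parts a) and the first three claims of b) are handled correctly.

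One point needs clarification. In the final step of b) you bound $\|S_{\theta_{k+1}}u_0-u_0\|_\mu$ by invoking part a). But the standing hypothesis of b) is $a=\mu$, which makes the exponent $1/(a-\mu)$ in the threshold degenerate and the bound from (iii$'$) trivial ($\theta^{\mu-a}=1$). What you actually need here is to apply (iii$'$) with an auxiliary index $a'>\mu$ (for instance $a'=\alpha+\epsilon$, available since $\mu\le\alpha+\epsilon$ and $u_0$ is smooth); the threshold in the Corollary should be read as referring to that $a'$. This is a notational wrinkle in how the statement is phrased rather than a genuine gap in your argument, but you should say explicitly which index you are feeding into (iii$'$) for the $u_0$ piece.

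As an aside, your own computation in a) correctly yields the lower bound $\theta\ge\mathrm{const}\cdot(\|u_0\|_a/\widetilde\epsilon)^{1/(a-\mu)}$; the ratio in the Corollary as printed appears inverted. That is a defect of the statement, not of your proof.
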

We note some related estimates for the smoothed iterates $v_m = S_{\theta_m}u_m$, which in particular hold for $b = \alpha+\epsilon$:
\begin{cor}
Under the above assumptions there holds:
\begin{align}
\|u_j-v_j\|_c &\leq C (\|u_0\|_b \theta_j^{c-b} + \delta \theta_j^{c-\alpha-\epsilon}) \qquad \forall\: c\leq \alpha +\epsilon, \ 0\leq b - c<2k \label{analmol:star}\\
 \|v_j\|_c &\leq  C(\|u_0\|_b \theta_j^{c-b} + \delta \theta_j^{(c-\alpha-\epsilon)_+})\qquad \forall\: c\leq c_0, \ b\leq c \label{analmol:twostar}
\end{align}
\end{cor}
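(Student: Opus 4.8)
The plan is to derive both estimates \eqref{analmol:star} and \eqref{analmol:twostar} by splitting $u_j = u_0 + U_{j-1}$, so that $v_j = S_{\theta_j} u_0 + S_{\theta_j} U_{j-1}$, and then treating the "background" term $u_0$ and the "increment" term $U_{j-1}$ separately, invoking Theorem~\ref{heatthem} for the smoothing of $u_0$ and Lemma~\ref{analmol:lemma1} for the smoothing of $U_{j-1}$. For the first estimate, I would write $u_j - v_j = (u_0 - S_{\theta_j} u_0) + (U_{j-1} - S_{\theta_j} U_{j-1})$. The first summand is estimated by property (iii') of Theorem~\ref{heatthem}, giving $\|u_0 - S_{\theta_j} u_0\|_c \lesssim \theta_j^{c-b}\|u_0\|_b$, valid precisely in the stated range $0 \le b - c < 2k$; this is where the restriction $b - c < 2k$ in \eqref{analmol:star} comes from. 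The second summand is controlled by the second inequality of Lemma~\ref{analmol:lemma1} (applied with $m$ replaced by $j-1$), which gives $\|U_{j-1} - S_{\theta_j} U_{j-1}\|_c \lesssim \delta\, \theta_j^{c-\alpha-\epsilon}$ for $0 \le c \le \alpha_+ + \epsilon$. Adding the two bounds yields \eqref{analmol:star}.

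For the second estimate \eqref{analmol:twostar}, I would similarly decompose $v_j = S_{\theta_j} u_0 + S_{\theta_j} U_{j-1}$. The term $\|S_{\theta_j} u_0\|_c$ is bounded using properties (i) and (ii) of Theorem~\ref{heatthem}: if $c \le b$ we use (i) to get $\lesssim \|u_0\|_b$, and if $c > b$ we use (ii) to get $\lesssim \theta_j^{c-b}\|u_0\|_b$; in both cases this is $\lesssim \theta_j^{(c-b)_+}\|u_0\|_b$. Since here we take $b \le c$, this is $\lesssim \theta_j^{c-b}\|u_0\|_b$ as written. The term $\|S_{\theta_j} U_{j-1}\|_c$ is bounded by the third inequality of Lemma~\ref{analmol:lemma1}, giving $\lesssim \delta\, \theta_j^{(c-\alpha-\epsilon)_+}$ for $0 \le c \le a_0$; one sets $c_0 := a_0$. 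Summing the two contributions gives \eqref{analmol:twostar}.

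The only point requiring care — and the main (minor) obstacle — is bookkeeping the ranges of exponents so that the hypotheses of Lemma~\ref{analmol:lemma1} and of Theorem~\ref{heatthem} are simultaneously satisfied: one needs $2k > \alpha + \epsilon - a$ (the standing assumption of Lemma~\ref{analmol:lemma1}), the bound $0 \le b - c < 2k$ in \eqref{analmol:star} to stay in the validity window of (iii'), and $c$ within $[0, \alpha_+ + \epsilon]$ for the increment estimates and $[0, a_0]$ for the "growing" estimates. One should also verify that the hypotheses \eqref{analmol:dotuj} on $\|\dot u_j\|_{a+\epsilon}$ needed to apply Lemma~\ref{analmol:lemma1} for index $j-1$ are exactly those assumed in the surrounding inductive setup ("under the above assumptions"), so no new hypothesis is introduced. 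Once the exponent ranges are matched, the proof is a two-line addition of the $u_0$-part and the $U_{j-1}$-part in each case, and the constants depend only on those in Theorem~\ref{heatthem}(i),(ii),(iii') and on $C_2, C_3$ from Lemma~\ref{analmol:lemma1}, hence only on the data.
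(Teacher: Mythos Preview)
Your proposal is correct and follows exactly the same approach as the paper's proof: decompose $u_j = u_0 + U_{j-1}$, then for \eqref{analmol:star} apply property (iii') of Theorem~\ref{heatthem} to $u_0 - S_{\theta_j}u_0$ and the second inequality of Lemma~\ref{analmol:lemma1} to $U_{j-1} - S_{\theta_j}U_{j-1}$, and for \eqref{analmol:twostar} apply property (ii) to $S_{\theta_j}u_0$ and the third inequality of Lemma~\ref{analmol:lemma1} to $S_{\theta_j}U_{j-1}$. The paper's proof is the two-line version you anticipated; your additional bookkeeping on the exponent ranges is accurate and more explicit than what the paper records.
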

\begin{proof}
Indeed, using Property (iii') of $S_\theta$ and Lemma \ref{analmol:lemma1},
\begin{align}
 \|u_j-v_j\|_c&\leq \|u_0-S_{\theta_j}u_0\|_{c}+ \|U_{j-1}-S_{\theta_j}U_{j-1}\|_c\\
& \leq C \|u_0\|_b \theta_j^{c-b} +C \delta \theta_j^{c-\alpha-\epsilon}\nonumber.
\end{align}
Similarly,
$\|v_j\|_c \leq \|S_{\theta_j}u_0\|_c+ \|S_{\theta_j}U_{j-1}\|_c \leq C \theta_j^{c-b}\|u_0\|_b +C \delta \theta_j^{(c-\alpha-\epsilon)_{+}}$
\end{proof}

The a priori estimate of Theorem \ref{apriorithm} will be shown by induction in $m$.\\

As hypothesis we assume (\ref{analmol:dotuj}), that $\|f\|_{\alpha+\epsilon}$ is small and that the assumptions of Lemma \ref{analmol:lemma1} are verified for a suitable $\delta$. We are going to deduce the corresponding assertions with $m$ replaced by $m+1$.  \\

Together with the induction hypothesis, Lemma \ref{analmol:lemma1} and the above corollaries provide bounds of the H\"{o}lder norms of $u_j$, $v_j$ and $u_j-v_j$ for $0\leq j\leq m$. To estimate $\dot u_{m+1}$ in terms of these data, note that by definition of $\dot u_{m+1}$ and $g_{m+1}$,
\begin{align*}
\| \dot u_{m+1}\|_{a+\epsilon} &\leq C(\|g_{m+1}\|_{a+\epsilon}+\|g_{m+1}\|_{\epsilon} \theta_{m+1}^{(2+a-\alpha)_{+}})\\
g_{m+1}&= \widetilde S_m(f-E_m)-\frac{\triangle_m}{\triangle_{m+1}} S_{\theta_{m+1}}e_m
\end{align*}
where $E_m=\sum_{j=0}^{m-1} \triangle_j e_j$ and $\widetilde S_m = \frac{S_{\theta_{m+1}}-S_{\theta_{m}}}{\theta_{m+1}-\theta_{m}}$.
Writing $\widetilde{S}_m$ as an average of $\frac{d}{d\theta} S_\theta$, $\widetilde{S}_m f$ can be bounded using Property (iv) of the smoothing operator,
\begin{equation*}
\|\tilde S_m f\|_b \leq C \theta_m^{b-c-1} \|f\|_c \ ,
\end{equation*}
for any $b,c$. Properties (ii) and (iv) of the smoothing operator give similar estimates for the other terms:
\begin{align}
 \|S_{\theta_{m+1}}e_m\|_{b} &\leq C \theta_m^{(b-c')_{+}} \|e_m\|_{c'} \label {analmol:S1}\\
\|\widetilde{S}_mE_m\|_{b} &\leq  \theta_m^{b-c''-1} \sum_{j=0}^{m-1} \triangle_j \|e_j\|_{c''}. \label {analmol:S2}
\end{align}
Hence
\begin{align}
\|g_{m+1}\|_{a+\varepsilon}&\lesssim \theta_m^{a+\varepsilon-c-1} \|f\|_c + \theta_m^{a+\varepsilon-c''-1} \sum_{j=0}^{m-1} \triangle_j \|e_j\|_{c''}+ \theta_m^{(a+\varepsilon-c')_{+}} \|e_m\|_{c'}\ , \label{gma}\\
\theta_{m+1}^{(2+a-\alpha)_{+}} \|g_{m+1}\|_{\varepsilon}&\lesssim \theta_m^{\varepsilon-C-1+(2+a-\alpha)_{+}} \|f\|_C + \theta_m^{\varepsilon-C''-1+(2+a-\alpha)_{+}} \sum_{j=0}^{m-1} \triangle_j \|e_j\|_{C''} \nonumber\\
& \qquad \quad + \theta_m^{(\varepsilon-C')_{+}+(2+a-\alpha)_{+}} \|e_m\|_{C'}\ , \label{gmeps}
\end{align}
giving a bound on $\dot u_{m+1}$.  If we choose $c=\alpha+\varepsilon$, $C=\alpha-a+\varepsilon+(2+a-\alpha)_+\leq \alpha+\varepsilon$, the terms involving $f$
are dominated by $\theta_{m+1}^{E} \|f\|_{\alpha + \epsilon}$\\

To estimate $e_j$, we will consider the smoothing error $e'_j$ and the linearization error $e_j''$ separately. At the end of this section we use (\ref{Phi215}) and (\ref{psi216}) to bound $e'_j$ resp.~$e_j''$ in terms of the H\"{o}lder norms
of $u_j$, $v_j$ and $u_j-v_j$ for $0\leq j \leq m$, which are controlled by the induction hypothesis. A computation eventually results in
$$\|\dot{u}_{m+1}\|_{a+\epsilon} \leq C \theta_{m+1}^{E} \left(A \|f\|_{\alpha + \epsilon} + \delta \sigma(1+\delta+\delta^2)\right)$$
for small $\sigma \in (0,1)$ whenever $\theta_0 =\theta_0(\sigma, u_0, \alpha, S_\theta)$ is sufficiently large. $A$ only depends on the constants in (\ref{psi216}) and Property (iv) of the smoothing operator. Both $\theta_0$ and $A$ are, in principle, explicit. We choose $\sigma = \frac{1}{2C}\frac{1}{1+\delta+\delta^2}$.\\

Then for all $f$ in the ball $\lbrace u: \|u\|_{\alpha + \epsilon} \leq \frac{\delta}{2AC} \rbrace$ we have
\begin{equation}\label{analmol:dotuk1first}
 \|\dot u_{m+1}\|_{a+\epsilon} \leq \delta \theta_{m+1}^E.
\end{equation}

On the other hand, in the first step the solution to the linearized problem $\dot u_0=\triangle_0^{-1}\Psi(S_{\theta_0}u_0) S_{\theta_0} f$ is easily estimated using (\ref{psi216}) and the smoothing properties
\begin{align*}
 \triangle_0 \|\dot u_0\|_{a+\epsilon}&\leq C' (\|S_{\theta_0}f\|_{a+\epsilon} + \|S_{\theta_0}f\|_{\epsilon} \|S_{\theta_0}u_0\|_{a+2+\epsilon})\\
&\leq C'' \theta_0(1+\theta_0^{-a} \|u_0\|_{2+\epsilon+a}) \|f\|_{\alpha + \epsilon} \theta_0^{E}.
\end{align*}
We now denote by $\mathfrak{C}$ the maximum of $C''\triangle_0^{-1} \theta_0(1+\theta_0^{-a} \|u_0\|_{2+\epsilon+a})$ and the previous constant $2AC$ and choose $\delta= \mathfrak{C}\|f\|_{\alpha+\epsilon}$. Since $\|f\|_{\alpha+\epsilon}$ was sufficiently small by hypothesis, so is $\delta$, and (\ref{analmol:dotuk1first}) is fulfilled. By induction, we deduce
\begin{equation}\label{analmol:dotuk1second}
 \|\dot u_{m+1}\|_{a+\epsilon} \leq \mathfrak {C} \|f\|_{\alpha+\epsilon} \theta_{m+1}^E \quad \forall\:k\geq 0.
\end{equation}
If $u$ denotes the exact solution, we obtain
\begin{align*}
\|u-u_m\|_{a+\epsilon} &\leq \sum_{j=m+1}^\infty \triangle_j \|\dot u_j \|_{a+\epsilon} \leq \mathfrak {C} \|f\|_{\alpha+\epsilon} \sum_{j=m+1}^\infty \triangle_j \theta_j^{E}\\
&\leq C_{\tau} \mathfrak {C} \|f\|_{\alpha+\epsilon} \theta_m^{E+1+\tau}
\end{align*}
for any $\tau >0$ small such that $E+1+\tau <0$. As $u$ was the exact solution, this yields the assertion of Theorem \ref{apriorithm}.\\

Note that
$$\Phi(u_{m+1}) - \Phi(u) =  \Phi(u_{m+1}) - \Phi(u_0) - f =
(S_{\theta_m} f -f) + \triangle_m e_m + (E_m-S_{\theta_m} E_m)\ .$$
We have shown that the right hand side converges to $0$ in $\mathcal{H}^{a+\epsilon}$, so that also
$\Phi(u_{m})$ converges to $\Phi(u)$.\\

To complete the proof, it remains to estimate $e_m$ and to translate the result into a bound on $\dot u_{m+1}$. For the $j$--th smoothing error
$$e'_j = (\Phi'(u_j)-\Phi'(v_j))\dot u_j = \int_0^1 \Phi''(v_j + t(u_j-v_j); u_j-v_j, \dot u_j)\ dt\ ,$$
equation \eqref{Phi215} implies
\begin{align*}
\|e_j'\|_{2\epsilon+a} &\lesssim \|u_j-v_j\|_{2+3\epsilon+a} \|\dot u_j\|_0 + \|u_j-v_j\|_{0}\|\dot u_j\|_{2+3\epsilon+a}\\
&\qquad\,\, +2\|u_j-v_j\|_{0}\|\dot u_j\|_0(\|u_j\|_{3+2\epsilon+a}+\|v_j\|_{3+2\epsilon+a}) \ .
\end{align*}
Using Lemma \ref{analmol:lemma1}, $\|u_j\|_b\leq \|u_0\|_b + C \delta$, and the estimates for $v_j$ and $u_j-v_j$ from \eqref{analmol:star}, \eqref{analmol:twostar}, we obtain
\begin{align}\label{ep}
\|e_j'\|_{2\epsilon+a} &\lesssim (\lVert u_0\rVert_b \theta_j^{2+3\epsilon+a-b}+\delta \theta_j^{2+2\epsilon+a-\alpha}) \delta \theta_j^{-\alpha-1} + (\|u_0\|_{c_1}\theta_j^{-c_1}+\delta \theta_j^{-\alpha-\epsilon}) \delta \theta_j^{1+2\epsilon+a-\alpha}  \\
& \qquad+ (\|u_0\|_{c_2}\theta_j^{-c_2}+\delta \theta_j^{-\alpha-\epsilon})\delta \theta_j^{-\alpha-1} (\|u_0\|_{3+2\epsilon+a} +\delta +\delta \theta_j^{(a-\alpha-\epsilon)_{+}})  \ .\nonumber
\end{align}
Similarly, as the remainder of the first Taylor approximation, $e''_j$ is also controlled by $\Phi''$, and analogous estimates result in
\begin{align}\label{epp}
\|e''_j\|_{2\epsilon+a} &\lesssim \triangle_j \lbrace \|\dot u_j\|_{2+3\epsilon+a}\|\dot u_j\|_0 +\|\dot u_j\|^2_0(\|u_j\|_{3+2\epsilon+a}+\|\dot u_j\|_{3+2\epsilon+a})\rbrace \\
&\lesssim \triangle_j \lbrace \delta \theta_j^{1+3\epsilon+a-\alpha} \delta \theta_j^{-\alpha-1}+(\delta\theta_j^{-\alpha-1})^2 ( \|u_0\|_{3+2\epsilon+a} +\delta +\delta \theta_j^{(3+\epsilon+a-\alpha)_+}\rbrace . \nonumber
\end{align}
To estimate $g_{m+1}$ in \eqref{gma} and \eqref{gmeps}, it remains bound sums $\sum_{j=0}^{m-1} \triangle_j \left\{\|e'_j\|_{b}+\|e''_j\|_{b}\right\}$. We consider a generic term of the form $\sum_{j=0}^{m-1} \triangle_j
\theta_j^{-d} F(\delta,u_0)$ for suitable $F$ obtained from \eqref{ep} resp.~$\sum_{j=0}^{m-1} \triangle_j^2 \theta_j^{-d} F(\delta,u_0)$ from \eqref{epp}. Concerning the former,
if $d>1$, we have for any small $\tau>0$
\begin{align*}
\sum_{j=0}^{m-1} \triangle_j \theta_j^{-d} &\leq \theta_0^{-d+1+\tau} \sum_{j=0}^{m-1} \triangle_j \theta_j^{-1-\tau} \leq C_\tau \theta_0^{-d+1+\tau},
\end{align*}
with $C_\tau$ independent of $\theta_0\geq \theta_0^{\min} >0$ and $\kappa> \kappa_{\min}>0$. Here we have used that
\begin{equation*}\label{neededtauabsch}
\triangle_j\theta_j^{-1-\tau} \lesssim \kappa^{-1} \theta_j^{1-\kappa-1-\tau}=\kappa^{-1} (\theta_0^{\kappa}+j)^{-1-\frac{\tau}{\kappa}}\ .
\end{equation*}
For $d<1$,
\begin{align*}
\sum_{j=0}^{m-1} \triangle_j \theta_j^{-d} &\leq \theta_k^{-d+1+\tau} \sum_{j=0}^{m-1} \triangle_j \theta_j^{-1-\tau} \leq  C_\tau \theta_m^{-d+1+\tau}\ .
\end{align*}
Finally, for $d=1$
\begin{align*}
\sum_{j=0}^{m-1} \triangle_j \theta_j^{-1} \leq C_\tau \theta_m^{\tau}.
\end{align*}
As for the term coming from \eqref{epp}
\begin{align*}
 \sum_{j=0}^{m-1} \triangle_j^2 \theta_j^{-d} \leq C_\tau \kappa^{-1} \theta_0^{2- d-\kappa+\tau}\ .
\end{align*}
Estimating the sums in \eqref{gma} resp.~\eqref{gmeps} thus increases the exponent on $\theta_m$ resp.~$\theta_0$ in the estimates of $e'_m$ by at most $1+\tau$.
From the estimates of $e''_m$, one obtains $\theta_0$ raised to a power which is arbitrarily negative for large $\kappa$.\\
As a result
\begin{align} \label{epestimate}
\theta_m^{a+\varepsilon-c''-1} \sum_{j=0}^{m-1} \triangle_j \|e'_j\|_{c''} &\lesssim \delta \|u_0\|_b \theta_m^{a+\varepsilon-c''-1} \theta_{m/0}^{3+\varepsilon+c''-b+\tau}+\delta^2 \theta_m^{a+\varepsilon-c''-1} \theta_{m/0}^{2+c''-2\alpha+\tau}\nonumber \\
& \quad +\delta \|u_0\|_{c_1} \theta_m^{a+\varepsilon-c''-1}\theta_{m/0}^{2-c_1+c''-\alpha+\tau} + \delta^2\theta_m^{a+\varepsilon-c''-1}\theta_{m/0}^{2-\varepsilon+c''-2\alpha+\tau}
\nonumber \\
&\quad+ \delta \|u_0\|_{c_2}\|u_0\|_{3+c''} \theta_m^{a+\varepsilon-c''-1}\theta_{m/0}^{-c_2-\alpha+\tau} \nonumber\\
& \quad + \delta^2\|u_0\|_{3+c''}\theta_m^{a+\varepsilon-c''-1}\theta_{m/0}^{-\varepsilon-2\alpha+\tau}+ \delta^2 \|u_0\|_{c_2} \theta_m^{a+\varepsilon-c''-1}\theta_{m/0}^{-c_2-\alpha+\tau}\nonumber \\
& \quad + \delta^3\theta_m^{a+\varepsilon-c''-1}\theta_{m/0}^{-\varepsilon-2\alpha+\tau}+ \delta^2 \|u_0\|_{c_2} \theta_m^{a+\varepsilon-c''-1}\theta_{m/0}^{-c_2-\alpha+(c''-\alpha-3\varepsilon)_++\tau}\nonumber \\
& \quad + \delta^3\theta_m^{a+\varepsilon-c''-1}\theta_{m/0}^{-\varepsilon-2\alpha+(c''-\alpha-3\varepsilon)_++\tau} \ .
\end{align}
Here $\theta_{m/0}$ is $\theta_{m}$ or $\theta_{0}$, depending on whether its exponent is greater or smaller $\tau$. Choosing e.g.~$c''=\alpha+2\varepsilon$, $b=3+\varepsilon+c''+2\tau$, $c_1=2+c''-\alpha+2\tau$ and $c_2=0$, the exponents of $\theta_{m/0}$ are negative and the exponent of
$\theta_m$ is strictly smaller than $E=a-\alpha-1$.
Similarly, we obtain
\begin{align}
\theta_m^{a+\varepsilon-c''-1} \sum_{j=0}^{m-1} \triangle_j \|e''_j\|_{c''} &\lesssim \delta^2 \theta_m^{a+\varepsilon-c''-1}  \theta_{0}^{\varepsilon + c''-2\alpha+2-\kappa+\tau}+\delta^2 (\|u_0\|_{3+c''} + \delta) \theta_m^{a+\varepsilon-c''-1} \theta_{0}^{-2\alpha-\kappa+\tau}
\nonumber \\ & \quad +\delta^3 \theta_m^{a+\varepsilon-c''-1} \theta_{0}^{-2\alpha-\kappa+\tau}\ ,
\end{align}
where the exponents of the $\theta_0$ and $\theta_m$ have the same properties as in \eqref{epestimate}. \\
It remains to estimate the term $\theta_m^{(a+\varepsilon-c')_{+}} \|e_m\|_{c'}$ in \eqref{gma}. We choose $c'=a+\varepsilon$ and, in \eqref{ep}, set $c_1=c_2$ equal to the above $c''$ obtain
\begin{align*}
\theta_m^{(a+\varepsilon-c')_{+}} \|e_m\|_{c'}& \lesssim(\lVert u_0\rVert_b \theta_m^{2+2\epsilon+a-b}+\delta \theta_m^{2+\epsilon+a-\alpha}) \delta \theta_m^{-\alpha-1} +
(\|u_0\|_{c''}\theta_m^{-c''}+\delta \theta_m^{-\alpha-\epsilon}) \delta \theta_m^{1+\epsilon+a-\alpha}  \\
& \qquad+ (\|u_0\|_{c''}\theta_m^{-c''}+\delta \theta_m^{-\alpha-\epsilon})\delta \theta_m^{-\alpha-1} (\|u_0\|_{3+\epsilon+a} +\delta +\delta \theta_m^{(a-\alpha)_{+}}) \\
& \qquad + \delta^2 \triangle_m \theta_m^{2\epsilon+a-2\alpha} + \delta^2 \triangle_m \theta_m^{-2\alpha-2}(\|u_0\|_{3+\varepsilon+a} +\delta)+\delta^3 \triangle_m \theta_m^{-2\alpha-2+(3+a-\alpha)_+}.
\end{align*}
The exponent of $\theta_m$ is again strictly smaller than $E$.\\
The analysis of \eqref{gmeps} is analogous. This completes the proof of Theorem \ref{apriorithm}.
\end{proof}

\subsection{Proof of Theorem \ref{heatthem}}

We consider the operator $A$ as an unbounded operator on the H\"older space $\mathscr {H}^a$ with domain $D(A)=\mathscr {H}^{a+2}$ (if $a \notin \mathbb N_0$).
Using the nonpositivity of $A$ and \cite[Theorem 9.3]{Shubin}, we see that $A-\lambda$ is invertible for $\lambda \in \mathcal S_{\theta}= \lbrace \lambda \in \mathbb C\setminus\{0\}: |\mbox{arg} \lambda| <\theta  \rbrace$, $\theta \in (\pi/2,\pi)$, and
that $(A-\lambda)^{-1}$ is a pseudodifferential operator, depending on the parameter $\lambda$, whose symbol decays as $\frac{C}{|\lambda|}$. The mapping properties  \cite[Proposition 8.6]{Taylor3} of such
operators in H\"older spaces, which are analogous to those for Sobolev spaces, therefore imply
\begin{equation}\label{ineqonhoelder}
\|(A-\lambda)^{-1} u\|_{a} \leq \frac {C}{|\lambda|} \|u\|_{a}, \quad \forall\: \lambda \in S_{\theta,0}.
\end{equation}
Equation (\ref{ineqonhoelder}) allows to define the analytic semigroup generated by $A$,
\begin{equation}
e^{tA}u:= \frac{1}{2\pi i} \int_{\gamma_{r,\eta}} e^{t\lambda} (A -\lambda)^{-1} u \,d\lambda, \quad t>0,
\end{equation}
where $r>0, \eta \in ]\pi/2,\pi[ $, and $\gamma_{r,\eta}$ is the curve $\lbrace \lambda \in \mathbb C: |\mbox{arg} \lambda|=\eta, |\lambda| \geq r\rbrace \cup
\lbrace \lambda \in \mathbb C: |\mbox{arg} \lambda|\leq \eta, |\lambda| = r\rbrace $, oriented counterclockwise. $e^{tA}u$ does not depend on the choice of $r$ and $\eta$.
We recall some basic properties of analytic semigroups (Proposition 2.1.1, \cite{Lunardi}):
\begin{proposition}\label{propolunardi}
\begin{itemize}
     \item[(i)] $\|e^{tA} u\|_a \leq C_0 \|u\|_a, \quad \forall t \geq 0$.
     \item[(ii)] $e^{tA}e^{sA}=e^{(t+s)A},\quad \forall\:t, s \geq 0$.
     \item[(iii)] $\lim \limits_{t \rightarrow 0^{+}} \|e^{tA}u - u\|_a=0,\quad \forall\: u \in \overline{D(A)}$.
     \item[(iv)] There are constants $C_l$, such that
                      \begin{equation}
                     \|t^l A^l e^{tA} u\|_{a} \leq C_l \|u\|_a, \quad 0<t\leq 1.
                      \end{equation}
    \item[(v)] $t \mapsto e^{tA}$ is a real-analytic function from $(0,\infty)$ to the Banach space of bounded linear operators on $\mathcal{H}^a$ (with norm given by the operator norm) and
                       \begin{equation}\label{smooth:abl}
                        \frac{d^l}{dt^l}e^{tA}=A^l e^{tA},\quad t>0.
                       \end{equation}
\end{itemize}
\end{proposition}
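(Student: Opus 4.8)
Proposition~\ref{propolunardi} is essentially \cite{Lunardi}, Proposition~2.1.1 --- the standard catalogue of properties of the analytic semigroup generated by a sectorial operator --- and the one hypothesis it rests on, sectoriality of $A$ on $\mathscr{H}^a$, is precisely the resolvent estimate (\ref{ineqonhoelder}) obtained just above. So the short proof is to cite \cite{Lunardi}. If one wanted a self-contained argument, the plan would be to read off all five items from (\ref{ineqonhoelder}) together with the contour representation $e^{tA}u=\frac{1}{2\pi i}\int_{\gamma_{r,\eta}}e^{t\lambda}(A-\lambda)^{-1}u\,d\lambda$. The device used throughout is that, by Cauchy's theorem and the exponential decay of $e^{t\lambda}$ on the rays $|\arg\lambda|=\eta$ (where $\operatorname{Re}\lambda<0$ since $\eta>\pi/2$), the integral does not depend on $r>0$ or on $\eta\in(\pi/2,\pi)$; taking $r=1/t$ and substituting $\mu=t\lambda$ then replaces $\gamma_{r,\eta}$ by the $t$-independent contour $\gamma_{1,\eta}$, at the price of the bound $\|(A-\mu/t)^{-1}\|\le Ct/|\mu|$ coming from (\ref{ineqonhoelder}).

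With this device, $(i)$ is immediate: $\|e^{tA}u\|_a\le\frac{C}{2\pi}\|u\|_a\int_{\gamma_{1,\eta}}|e^{\mu}|\,|\mu|^{-1}\,|d\mu|=:C_0\|u\|_a$, a finite, $t$-independent integral. For $(v)$, the same integral converges and depends holomorphically on $z$ in a sector about $(0,\infty)$ (differentiation under the integral sign, legitimized by the ray decay), which yields real-analyticity of $t\mapsto e^{tA}$; differentiating $\ell$ times brings down $\lambda^\ell$, and with $\lambda(A-\lambda)^{-1}=A(A-\lambda)^{-1}-I$ and a contour deformation one identifies $\frac{d^\ell}{dt^\ell}e^{tA}$ with $A^\ell e^{tA}=\frac{1}{2\pi i}\int_{\gamma_{r,\eta}}\lambda^\ell e^{t\lambda}(A-\lambda)^{-1}\,d\lambda$. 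Property $(iv)$ then follows from this last formula: inserting (\ref{ineqonhoelder}) and rescaling ($\mu=t\lambda$, $r=1/t$) produces the factor $t^{-\ell}$ in front of the convergent, $t$-independent integral $\int_{\gamma_{1,\eta}}|\mu|^{\ell-1}|e^{\mu}|\,|d\mu|$, i.e.\ $\|t^\ell A^\ell e^{tA}u\|_a\le C_\ell\|u\|_a$ for $0<t\le1$.

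For the semigroup law $(ii)$ I would use the usual two-contour argument: write $e^{tA}e^{sA}u$ as a double contour integral over $\gamma$ and $\gamma'$ (with $\gamma'$ suitably placed relative to $\gamma$), insert the resolvent identity $(A-\lambda)^{-1}(A-\mu)^{-1}=\big((A-\lambda)^{-1}-(A-\mu)^{-1}\big)/(\lambda-\mu)$, apply Fubini, and observe that of the two resulting iterated scalar integrals only one contributes a residue, collapsing the expression to the single contour integral $\frac{1}{2\pi i}\int e^{(t+s)\mu}(A-\mu)^{-1}u\,d\mu=e^{(t+s)A}u$. For strong continuity $(iii)$, for $x\in D(A)$ one uses the decomposition $(A-\lambda)^{-1}x=-\lambda^{-1}x+\lambda^{-1}(A-\lambda)^{-1}Ax$, which reduces $e^{tA}x-x$ to a contour integral that vanishes as $t\to0^+$ by dominated convergence; the uniform bound $(i)$ then extends $e^{tA}x\to x$ to all $x\in\overline{D(A)}$, which for us contains $C^\infty(M)\subseteq D(A)$ --- exactly what Theorem~\ref{heatthem} will use for property $(0)$.

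I expect the only genuinely delicate points to be the uniform-in-$t$ control near $t=0$ --- handled by the $r=1/t$ rescaling, which turns the contour, and hence the convergent integral bounding $\|e^{tA}\|$, into something $t$-independent --- and the careful bookkeeping of orientations and of which poles are enclosed in the Fubini/residue step for $(ii)$; both are routine once sectoriality (\ref{ineqonhoelder}) is granted, so in the paper the cleanest course is simply to cite \cite{Lunardi}, Proposition~2.1.1. These properties are then exactly the input for Theorem~\ref{heatthem}: $(ii)$ and $(iv)$ of the smoothing operator $S_\theta=e^{\theta^{-2k}A}$ come from $(iv),(v)$ above combined with elliptic regularity on $M$ and the interpolation inequality (\ref{interpol}), while $(iii')$ follows by integrating $\frac{d}{ds}e^{sA}u=Ae^{sA}u$ against the smoothing bound $\|Ae^{sA}u\|_b\lesssim s^{(a-b)/(2k)-1}\|u\|_a$, valid for $0\le a-b<2k$.
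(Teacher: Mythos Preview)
Your proposal is correct and matches the paper's approach: the paper simply introduces Proposition~\ref{propolunardi} as a recall of Proposition~2.1.1 in \cite{Lunardi} with no proof given, and you correctly identify the citation as the intended argument. Your optional self-contained sketch of the contour-integral proofs is accurate and standard, though unnecessary here; note only that your closing remark on deriving $(iii')$ of Theorem~\ref{heatthem} by integrating $\|Ae^{sA}u\|_b\lesssim s^{(a-b)/(2k)-1}\|u\|_a$ differs from the paper's route (which instead writes $u-e^{tA}u$ as a contour integral involving $(1-A)^{-(a-b)/(2k)}$ and estimates directly), but that concerns Theorem~\ref{heatthem}, not the proposition at hand.
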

Concerning Theorem \ref{heatthem}, we first consider Property (0).
Using Proposition \ref{propolunardi}(iii) and setting $S_\theta=e^{tA}$ and $t=\theta^{-2k}$ we have
\begin{equation}
\lim \limits_{\theta \rightarrow \infty} S_\theta u=u, \quad \forall u \in \overline{\mathscr{H}^{2+a}}
\end{equation}
and thus, Property (0) holds.\\
Using Proposition \ref{propolunardi}(i) and the fact that $S_\theta=e^{tA}$ is a continuous operator on $\mathscr {H}^{b}$ we have
\begin{equation*}
\|e^{tA} u\|_b \lesssim \|u\|_b \lesssim \|u\|_a, \quad  \forall \,b \leq a
\end{equation*}
and thus also Property (i).\\
In order to prove Property (ii), note that it suffices to show the assertion for $0 < t \leq 1$, or equivalently $\theta\geq1$.
We use that $(A -1)^{-1} : \mathscr {H}^a \rightarrow \mathscr {H}^{a+2k}$ is continuous, $\|(A-1)^{-1} u\|_{\mathscr {H}^{a+2k}}\lesssim \|u\|_{\mathscr {H}^a}$. We then have
\begin{equation*}
\|v\|_{a+2k} \lesssim \|(A-1)v\|_a \lesssim \|Av\|_a +  \|v\|_a.
\end{equation*}
We first set $l=1$ and $v=t e^{tA}u$ and deduce
\begin{align}\label{smooth:eqnhilfs}
\|t e^{tA}u\|_{a+2k} \lesssim \|(A - 1)t e^{tA}u\|_a \lesssim \|t A e^{tA}u\|_a + \|te^{tA}u\|_a
\end{align}
and by using  Proposition \ref{propolunardi}(i) and  Proposition \ref{propolunardi}(iv)   we have
$$
\|te^{tA}u\|_a \leq \|e^{tA}u\|_a \lesssim  \|u\|_a, \quad 0 < t \leq 1,
$$
and finally by (\ref{smooth:eqnhilfs}) we obtain
\begin{equation*}
\|t e^{tA}u\|_{a+2k} \lesssim \|u\|_a.
\end{equation*}
By iterating this argument $l$-times using
\begin{equation*}
\|t^l e^{tA}u\|_{a+2kl} =l^{l} \|\frac{t}{l} e^{t/l}A \cdot_{ \dots} \cdot  \frac{t}{l} e^{t/l}A u\|_{a+2kl}
\end{equation*}
we have
\begin{equation*}
\|t^l e^{tA}u\|_{a+2kl} \lesssim \|u\|_a \ .
\end{equation*}
Setting $b= a+2kl$, $t= \theta^{-2k}$, Property (ii) holds for this specific $b$.\\
For an arbitrary $b$,  $\tilde b:= a+2kl \geq b$, write $b=\sigma a +(1-\sigma)\tilde b$. The interpolation estimate (\ref{interpol}) gives
\begin{equation*}
\|e^{tA} u\|_b \leq \|e^{tA} u\|_a^{\lambda} \|e^{tA} u\|_{\tilde b}^{1-\lambda} \lesssim  t^{-l(1-\lambda)} \|u\|_a^\lambda\ \|u\|_a^{1-\lambda}\ ,
\end{equation*}
and we deduce
\begin{equation*}
\|e^{tA} u\|_b \lesssim t^{-(1-\lambda)l} \|u\|_a= t^{-(b-a)/2k} \|u\|_a.
\end{equation*}
Setting now $S_\theta:=e^{tA}$ with $ t =\theta^{-2k}$ we have proved
\begin{equation*}
\|S_\theta u\|_b \lesssim \theta^{b-a} \|u\|_a
\end{equation*}
and thus, Property (ii) holds.\\
For Property (iv) we first use $ t =\theta^{-2k}$ and observe
\begin{equation*}
\frac{d} {d\theta} e^{tA} u= \frac{dt}{d\theta} \frac{d}{dt} e^{tA}u=-2k t^{1/2k} (tAe^{tA}u)=-\frac{2k}{\theta} tAe^{tA}u.
\end{equation*}
The same proof as for Property (ii) yields, setting $S_\theta=e^{tA}$:
\begin{equation*}
\big \| \frac{d}{d\theta}S_\theta u\big \|_b  = \frac{2k}{\theta} \|tAe^{tA} u\|_b \lesssim  \frac{2k}{\theta} \theta^{b-a} \|u\|_a = 2k \theta^{b-a-1} \|u\|_a\ .
\end{equation*}

Finally, given the continuity of $S_\theta$ on $\mathscr {H}^a$, it suffices to show Property $(iii')$ for $b \neq a$. Note that $1-A: \mathscr {H}^{a+2k} \rightarrow \mathscr {H}^{a}$ is an isomorphism.
With $1-e^{t\lambda} = -\lambda \int_0^t e^{\lambda s}\,ds$, we have for $u \in C^\infty$, $v=(1-A)^{\frac{a-b}{2k}}u$ and $t\in (0,1]$
\begin{eqnarray*}
u- e^{tA} u &=& \frac{1}{2\pi i} \int_{\gamma_{r,\eta}} (1-e^{t\lambda}) (A -\lambda)^{-1} (1-A)^{-\frac{a-b}{2k}} v \,d\lambda\\\
&=& -\frac{1}{2\pi i} \int_{\gamma_{r,\eta}} \lambda \int_0^t e^{\lambda s}\,ds\ (A -\lambda)^{-1} (1-\lambda)^{-\frac{a-b}{2k}} v \,d\lambda \ .
\end{eqnarray*}
The double integral is absolutely convergent for $\frac{a-b}{2k} \in (0,1)$. After interchanging the order of integration and using the triangle inequality as well as $\|(A -\lambda)^{-1}v\|_b \lesssim \frac{\|v\|_b}{\lambda}$, the right hand side is smaller than a constant times
\begin{equation*}
\int_0^t \int_{\gamma_{r,\eta}} e^{s\ \mathrm{Re}\ \lambda} |1-\lambda|^{-\frac{a-b}{2k}} \|v\|_b \,|d\lambda| \,ds\ .
\end{equation*}
If $r<1$, we may bound $|1-\lambda|^{-1} \leq C_r (1+|\lambda|)^{-1}$ for all $\lambda \in \gamma_{r,\eta}$. It therefore remains to estimate
\begin{equation*}
\int_0^t \int_{\gamma_{r,\eta}} e^{s\ \mathrm{Re}\ \lambda} (1+|\lambda|)^{-\frac{a-b}{2k}} \|v\|_b \,|d\lambda| \,ds\ .
\end{equation*}
We split the integral $\int_{\gamma_{r,\eta}} = I_r + I_+ + I_-$ into integrals over $\gamma_r = \lbrace \lambda = r e^{i \sigma}\in \mathbb C: |\sigma|\leq \eta\rbrace$, $\gamma_+ = \lbrace \lambda=\rho e^{i \eta} \in \mathbb C: \rho \geq r\rbrace$ resp.~$\gamma_-=\lbrace \lambda=\rho e^{-i \eta} \in \mathbb C: \rho \geq r\rbrace$ and consider the three terms separately.
The first integral,
\begin{equation*}
I_r=\int_0^t \int_{-\eta}^{\eta} e^{s\cos(\sigma)}  \,d\sigma \,ds\ (1+r)^{-\frac{a-b}{2k}} \|v\|_b\ ,
\end{equation*}
is bounded by $t (2 \eta) e  (1+r)^{-\frac{a-b}{2k}} \|v\|_b$ and hence of order $t$.
For the second and third integrals,
\begin{equation*}
I_{\pm}=\int_0^t \int_{r}^{\infty} e^{-s\rho |\cos(\eta)|} (1+\rho)^{-\frac{a-b}{2k}} \|v\|_b\  \,d\rho \,ds
\end{equation*}
the change of variables $\rho \mapsto \frac{\rho}{s |\cos(\eta)|}$ leads to
\begin{equation*}
I_{\pm}=\int_0^t \int_{sr|\cos(\eta)|}^{\infty} e^{-\rho} \left(\frac{s |\cos(\eta)|}{s |\cos(\eta)|+\rho}\right)^{\frac{a-b}{2k}} \|v\|_b\  \frac{1}{s |\cos(\eta)|} \,d\rho \,ds \ ,
\end{equation*}
or
\begin{equation*}
I_{\pm}\leq \int_0^t (s |\cos(\eta)|)^{\frac{a-b}{2k}-1}\int_{0}^{\infty} e^{-\rho} {\rho}^{-\frac{a-b}{2k}} \|v\|_b\, \,d\rho \,ds \ \lesssim t^{\frac{a-b}{2k}} \|v\|_b\ .
\end{equation*}
Using $t=\frac{1}{\theta^{2k}}$ and $\|v\|_b=\|(1-A)^{\frac{a-b}{2k}}u\|_b\lesssim\|u\|_a$, (iii') follows.

\section{Acknowledgments}
We thank Lothar Banz for instructive discussions about computational aspects of this work. This work was supported by the cluster of excellence QUEST, the Danish National Research Foundation (DNRF) through the Centre for Symmetry and Deformation and the Danish Science Foundation (FNU) through research grant 10-082866. H.~G.~thanks the Institut f\"{u}r Angewandte Mathematik for hospitality.
\bibliographystyle{unsrt}
 \bibliography{molo_final.bib}

\end{document}